\newcommand{\newsection}[1]{\setcounter{equation}{0} \section{#1}}
\newcommand{\bea}{\begin{eqnarray}}
\newcommand{\eea}{\end{eqnarray}}
\newcommand{\vp}{\varphi}
\newcommand{\clb}{\mathcal{B}}
\newcommand{\cld}{\mathcal{D}}
\newcommand{\cle}{\mathcal{E}}
\newcommand{\clf}{\mathcal{F}}
\newcommand{\clh}{\mathcal{H}}
\newcommand{\clk}{\mathcal{K}}
\newcommand{\cll}{\mathcal{L}}
\newcommand{\clm}{\mathcal{M}}
\newcommand{\clq}{\mathcal{Q}}
\newcommand{\cls}{\mathcal{S}}
\newcommand{\D}{\mathbb{D}}
\newcommand{\raro}{\rightarrow}
\def\textmatrix#1&#2\\#3&#4\\{\bigl({#1 \atop #3}\ {#2 \atop #4}\bigr)}
\def\dispmatrix#1&#2\\#3&#4\\{\left({#1 \atop #3}\ {#2 \atop #4}\right)}
\newcommand{\be}{\begin{equation}}
\newcommand{\ee}{\end{equation}}
\newcommand{\ben}{\begin{eqnarray*}}
\newcommand{\een}{\end{eqnarray*}}
\newcommand{\NI}{\noindent}
\newcommand{\bi}{\begin{itemize}}
\newcommand{\ei}{\end{itemize}}
\newcommand{\ci}{C_{z_i}}
\newcommand{\cj}{C_{z_j}}
\newcommand{\mi}{M_{z_i}}
\newcommand{\mj}{M_{z_j}}
\newcommand{\he}{H^2_{\mathcal{E}}(\mathbb{D}^n)}
\newcommand{\pq}{P_{\mathcal{Q}}}
\newcommand{\ps}{P_{\mathcal{S}}}
\newcommand{\Z}{\mathbb{Z}}
\newtheorem{Question}{\sc Question}
\theoremstyle{definition}
\theoremstyle{plain}
\newtheorem{thm}{Theorem}[section]
\newtheorem{cor}[thm]{Corollary}
\newtheorem{lem}[thm]{Lemma}
\theoremstyle{definition}
\newtheorem{ex}[thm]{Example}
\numberwithin{equation}{section}
\let\phi=\varphi
\begin{document}

\title[Beurling quotient modules]{Beurling quotient modules on the polydisc}

\author[Bhattacharjee]{Monojit Bhattacharjee}
\address{Department of Mathematics, Indian Institute of Technology Bombay, Powai, Mumbai, 400076,
India}
\email{mono@math.iitb.ac.in, monojit.hcu@gmail.com}

\author[Das] {B. Krishna Das}
\address{Department of Mathematics, Indian Institute of Technology Bombay, Powai, Mumbai, 400076,
India}
\email{dasb@math.iitb.ac.in, bata436@gmail.com}

\author[Debnath]{Ramlal Debnath}
\address{Indian Statistical Institute, Statistics and Mathematics Unit, 8th Mile, Mysore Road, Bangalore, 560059,
India}
\email{ramlal-rs@isibang.ac.in, ramlaldebnath100@gmail.com }

\author[Sarkar]{Jaydeb Sarkar}
\address{Indian Statistical Institute, Statistics and Mathematics Unit, 8th Mile, Mysore Road, Bangalore, 560059,
India}
\email{jay@isibang.ac.in, jaydeb@gmail.com}


\subjclass[2010]{46J15, 47A15, 30H05, 47A56}

\keywords{Invariant subspaces, inner functions, factorizations, Beurling subspaces, Hardy space, dilations, polydisc}

\begin{abstract}
Let $H^2(\mathbb{D}^n)$ denote the Hardy space over the polydisc $\mathbb{D}^n$, $n \geq 2$. A closed subspace $\mathcal{Q} \subseteq H^2(\mathbb{D}^n)$ is called Beurling quotient module if there exists an inner function $\theta \in H^\infty(\mathbb{D}^n)$ such that $\mathcal{Q} = H^2(\mathbb{D}^n) /\theta H^2(\mathbb{D}^n)$. We present a complete characterization of Beurling quotient modules of $H^2(\mathbb{D}^n)$: Let $\mathcal{Q} \subseteq H^2(\mathbb{D}^n)$ be a closed subspace, and let $C_{z_i} = P_{\mathcal{Q}} M_{z_i}|_{\mathcal{Q}}$, $i=1, \ldots, n$. Then $\mathcal{Q}$ is a Beurling quotient module if and only if
\[
(I_{\mathcal{Q}} - C_{z_i}^* C_{z_i}) (I_{\mathcal{Q}} - C_{z_j}^* C_{z_j}) = 0 \qquad (i \neq j).
\]
We present two applications: first, we obtain a dilation theorem for Brehmer $n$-tuples of commuting contractions, and, second, we relate joint invariant subspaces with factorizations of inner functions. All results work equally well for general vector-valued Hardy spaces.
\end{abstract}

\maketitle

\tableofcontents

\newsection{Introduction}

Let $n \geq 1$ and let $\cle$ be a Hilbert space. The $\cle$-valued Hardy space over the polydisc $\D^n$, denoted by $H^2_{\cle}(\D^n)$, is the Hilbert space of all $\cle$-valued analytic functions $f$ on $\D^n$ such that
\[
\|f\| := \Big(\sup_{0 \leq r < 1} \int_{\mathbb{T}^n} \|f(rz_1, \ldots, r z_n)\|_{\cle}^2 d{m}({z}) \Big)^{\frac{1}{2}} < \infty,
\]
where $z = (z_1, \ldots, z_n)$ and $dm(z)$ is the normalized Lebesgue measure on the $n$-torus $\mathbb{T}^n$.  Given another Hilbert space $\cle_*$, we denote by $H^{\infty}_{\clb(\cle,\cle_*)}(\D^n)$ the Banach space of all $\clb(\cle, \cle_*)$-valued bounded analytic functions on $\D^n$. A function $\Theta \in H^{\infty}_{\clb(\cle,\cle_*)}(\D^n)$ is called \textit{inner} if $f \mapsto \Theta f$ defines an isometry $M_{\Theta}: H^2_{\cle_*}(\D^n) \raro \he$. The simplest example is $\Theta(z) = z_i I_{\cle}$, $i=1, \ldots, n$, whenever $\cle_* = \cle$. Therefore, $(M_{z_1}, \ldots, M_{z_n})$ is a commuting tuple of isometries on $\he$. Let $\cll \subseteq \he$ be a closed subspace. Then $\cll$ is said to be a \textit{quotient module} if $M_{z_i}^* \cll \subseteq \cll$ for all $i=1, \ldots, n$. The subspace $\cll$ is called a \textit{submodule} if $\cll^\perp$ is a quotient module \cite{DP}.

We pause for a brief aside to remark that if $n=1$, then a closed subspace $\clq \subseteq H^2_{\cle}(\D)$ is a quotient module if and only if there exist a Hilbert space $\cle_*$ and an inner function $\Theta \in H^\infty_{\clb(\cle_*, \cle)}(\D)$ such that $\clq^\perp = \Theta H^2_{\cle_*}(\D)$, or equivalently
\[
\clq = H^2_{\cle}(\D) \ominus \Theta H^2_{\cle_*}(\D) \cong H^2_{\cle}(\D) /\Theta H^2_{\cle_*}(\D).
\]
This follows from the classical Beurling-Lax-Halmos theorem \cite{NF}. Therefore, the preceding statement gives a satisfactory description of quotient modules of $H^2_{\cle}(\D)$. It is also worthwhile to emphasize that there is an inseparable alliance between quotient modules and bounded linear operators on Hilbert spaces. For instance, if $\clq$ is a quotient module of $H^2_{\cle}(\D)$, then the \textit{module operator} (also known as \textit{model operator}) $M_{\clq} :=P_{\clq} M_z|_{\clq}$ is a pure contraction on $\clq$. The classical Sz.-Nagy and Foias theory says that, up to unitary equivalence, these are all pure contractions on Hilbert spaces. Recall that an operator $X \in \clb(\clh)$ is called (i) a \textit{contraction} if $I_{\clh} - X X^* \geq 0$, or, equivalently, $\|Xh\|_{\clh} \leq \|h\|_{\clh}$ for all $h \in \clh$, and (ii) \textit{pure} if the sequence $\{X^{*m}\}_{m \geq 0}$ converges to $0$ in the strong operator topology.

Therefore, quotient modules of $\he$, $n \geq 1$, are of interest in operator theory, function theory, and operator algebras. However, in sharp contrast, the situation changes dramatically in the case when $n > 1$: In general, a quotient module of $\he$ does not necessarily admit a Beurling-type representation. In fact, concrete description of quotient modules of $\he$ is commonly regarded as one of the most difficult and important problems in modern operator theory and function theory \cite{CG, DP, R, Yang}.

In this paper, our interest is in comparing the variability of the classical Beurling representations of quotient modules in several variables. For a Hilbert space $\cle$ and a closed subspace $\clq \subseteq \he$, we say that $\clq$ is a \textit{Beurling quotient module} (and $\clq^\perp$ is a \textit{Beurling submodule}) if
\[
\clq = \he \ominus \Theta H^2_{\cle_*}(\D^n) \cong \he/\Theta H^2_{\cle_*}(\D^n),
\]
for some Hilbert space $\cle_*$ and inner function $\Theta \in H^\infty_{\clb(\cle_*, \cle)}(\D^n)$. Since $\mi M_{\Theta} = M_{\Theta} \mi$ for all $i=1, \ldots, n$, it follows, in particular, that $\clq$ ($\clq^\perp$) is also a quotient module (submodule) of $\he$. In the context of the above discussion, it appears natural to raise the following question:

\begin{Question}\label{Q: 1}
Which quotient modules of $\he$ admit Beurling representations?
\end{Question}

Curiously, despite its natural appeal and all possible applications, the above question remained fairly untouched. It is also the one variable work of Beurling \cite{B} which stirred our interest in this question. Evidently, this has a lot to do with the module (or model) operators associated with quotient modules. Given a quotient module $\clq \subseteq \he$, define the $n$-tuple of commuting contractions $C_z=(C_{z_1}, \ldots, C_{z_n})$ (we call it the \textit{tuple of module operators} or \textit{module operators} in short) on $\clq$ by
\[
\ci = P_{\clq} M_{z_i}|_{\clq} \qquad (i=1,\ldots, n),
\]
where $P_{\clq}$ is the orthogonal projection from $\he$ onto $\clq$. Therefore, $\clq$ is a contractive Hilbert module over $\mathbb{C}[z_1,\ldots, z_n]$ in the following sense (see \cite{CG, DP}):
\[
(p,h) \in \mathbb{C}[z_1,\ldots, z_n] \times \clq \longrightarrow p(C_{z_1},\ldots, C_{z_n})h \in \clq.
\]
The following theorem provides the answer to Question \ref{Q: 1}:

\begin{thm}\label{Char dc in terms of compre}
Let $\cle$ be a Hilbert space and let $\clq$ be a quotient module of $H^2_{\cle}(\D^n)$. Then $\clq$ is a Beurling quotient module if and only if
\[
(I_{\clq}- \ci^* \ci)(I_{\clq}-\cj^* \cj) = 0 \qquad (i \neq j).
\]
\end{thm}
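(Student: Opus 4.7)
Write $\cls := \clq^\perp$, $\ps := P_\cls$, and $V_i := \mi|_\cls$. My first step would be to record, for any quotient module, the general identity
\[
I_\clq - \ci^* \ci \;=\; \mi^* \ps \mi|_\clq \;=\; X_i^* X_i, \qquad X_i := \ps \mi|_\clq : \clq \raro \cls,
\]
which follows from $\mi^* \mi = I$, $\mi^*|_\clq = \ci^*$, and $\ps = I - \pq$. Both directions will hinge on this.

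For the only-if direction, with $\cls = M_\Theta H^2_{\cle_*}(\D^n)$ and hence $\ps = M_\Theta M_\Theta^*$, I would expand the product $(I_\clq - \ci^* \ci)(I_\clq - \cj^* \cj) f$, $f \in \clq$, as $\mi^* M_\Theta M_\Theta^* \mi \mj^* M_\Theta M_\Theta^* \mj f$, and simplify using (i) $\mi M_\Theta = M_\Theta \mi$ together with its adjoint $M_\Theta^* \mi^* = \mi^* M_\Theta^*$; (ii) the doubly commuting of shifts $\mi \mj^* = \mj^* \mi$ (for $i \neq j$); and (iii) $M_\Theta^* M_\Theta = I$ and $\mi^* \mi = I$. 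A telescoping chain of cancellations reduces the expression to $M_\Theta \mj^* M_\Theta^* \mj f \in \cls$. Since the operator in question also maps into $\clq$, the vector lies in $\cls \cap \clq = \{0\}$.

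For the if direction, a range-kernel argument using positivity of $X_i^* X_i$ and $X_j^* X_j$ gives
\[
(X_i^* X_i)(X_j^* X_j) = 0 \quad \Longleftrightarrow \quad X_i X_j^* = 0 \text{ on } \cls.
\]
A direct computation of $X_i X_j^* g$ for $g \in \cls$, using $\pq \mj^* g = \mj^* g - V_j^* g$ together with the doubly commuting of shifts and the $\mi$-invariance of $\cls$, yields the commutator identity
\[
X_i X_j^* \;=\; V_j^* V_i - V_i V_j^* \qquad (i \neq j).
\]
So the given hypothesis is equivalent to the pairwise doubly commuting of the restricted isometric tuple $(V_1, \ldots, V_n)$ on $\cls$. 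Each $V_i$ is automatically pure (since $\bigcap_{m \geq 0} z_i^m \he = \{0\}$), so a joint Wold-type decomposition applies: the wandering subspace $\cle_* := \cls \cap \bigcap_i \ker V_i^*$ generates $\cls$ orthogonally via $\cls = \bigoplus_{\alpha \in \mathbb{N}^n} V^\alpha \cle_*$, giving a unitary $\cls \cong H^2_{\cle_*}(\D^n)$ intertwining each $V_i$ with $\mi$. Realizing this unitary as multiplication by an operator-valued symbol yields an inner $\Theta \in H^\infty_{\clb(\cle_*, \cle)}(\D^n)$ with $\cls = M_\Theta H^2_{\cle_*}(\D^n)$, so that $\clq$ is Beurling.

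The main obstacle is this last geometric step: extracting the inner function from a doubly commuting pure tuple of isometries requires the Wold-type joint decomposition together with a careful identification of the wandering subspace---essentially Sarkar's characterization of Beurling-type submodules of $\he$. The algebraic reduction to the doubly commuting condition (the two commutator computations) is straightforward once the right defect operators $X_i$ are introduced.
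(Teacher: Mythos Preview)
Your proposal is correct, and the only-if direction is essentially the paper's own computation (expand via $P_\cls = M_\Theta M_\Theta^*$, use $M_{z_i} M_\Theta = M_\Theta M_{z_i}$, $M_{z_i} M_{z_j}^* = M_{z_j}^* M_{z_i}$, and $M_\Theta^* M_\Theta = I$, then observe the result lands in $\cls \cap \clq$). One small point: your displayed identity $I_\clq - \ci^*\ci = \mi^* \ps \mi|_\clq$ (no $\pq$ in front) tacitly uses that $\clq$ reduces $\mi^*\ps\mi$; the paper records this as a separate observation. Your argument only needs the version with $\pq$ in front, namely $X_i^* X_i$, so this is harmless.

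For the if direction your route is genuinely shorter than the paper's. Both arguments reduce to showing that the restricted shifts $V_i = \mi|_\cls$ doubly commute (equivalently, that $X_i X_j^* = \ps \mi \pq \mj^*|_\cls = 0$), after which both invoke the Mandrekar/Sarkar--Sasane--Wick characterization of Beurling submodules. You reach this in one step: from $(X_i^*X_i)(X_j^*X_j)=0$ and positivity, $\overline{\mathrm{ran}}\,X_j^* = \overline{\mathrm{ran}}\,X_j^*X_j \subseteq \ker X_i^*X_i = \ker X_i$, hence $X_i X_j^* = 0$. The paper instead develops three auxiliary lemmas (commutator formulas $[C_i, C^{*\hat k_i}] = \pq M_z^{*\hat k_i}\ps\mi\pq$, Douglas-lemma factorizations $[C_i, C^{*\hat k_i}] = X_{\hat k_i} D_{C_i}$, and a family of vanishing identities), then runs a multi-case analysis over $k,l \in \Z_+^n$ to prove $\pq M_z^{*k} X_{ij} M_z^l \pq = 0$ on a spanning set. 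Your range-kernel argument bypasses all of this machinery; the trade-off is that the paper's intermediate lemmas give finer structural identities for the compressed shifts (e.g.\ explicit commutator formulas), which may have independent interest but are not needed for the theorem itself.
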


The proof of Theorem \ref{Char dc in terms of compre} depends on several lemmas, some of which are of independent interest and related to the delicate structure of submodules and quotient modules of (vector-valued) Hardy space over $\D^n$. This is the content of Section \ref{sec: main theorem}.

In section \ref{sec: dilations}, we apply the above framework to dilations of $n$-tuples of commuting contractions. Let us explain this when $n=2$. A pair of commuting contractions $(T_1, T_2)$ on $\clh$ is called \textit{Brehmer} if $T_1$ and $T_2$ are pure and $D \geq 0$, where
\[
D := I_{\clh} - T_1 T_1^* - T_2 T_2^* + T_1 T_2 T_1^* T_2^*.
\]
It is known \cite{CV, MV} that a Brehmer pair dilates to $(M_{z_1}, M_{z_2})$ on a vector-valued Hardy space, or, equivalently, $(M_{z_1}, M_{z_2})$ on vector-valued Hardy spaces are analytic models of Brehmer pair. More specifically, if $(T_1, T_2)$ is a Brehmer pair, then there exist a Hilbert space $\cld$ (which is actually $\overline{\mbox{ran}} D^{\frac{1}{2}}$) and a quotient module $\clq \subseteq H^2_{\cld}(\D^2)$ such that
\[
(T_1, T_2) \cong (P_{\clq} M_{z_1}|_{\clq}, P_{\clq} M_{z_2}|_{\clq}).
\]
Since $\clq$ is not necessarily a Beurling quotient module, this model is not completely comparable with the classical Sz.-Nagy and Foias analytic models of pure contractions. The missing piece is precisely a paraphrase of Theorem \ref{Char dc in terms of compre}: Let $(T_1, T_2)$ be a pair of commuting contractions on $\clh$, and let $\cld = \overline{ran} D^{\frac{1}{2}}$ . Then there exist a Hilbert space $\cle_*$ and an inner function $\Theta \in H^\infty_{\clb(\cle_*, \cld)}(\D^n)$ such that
\[
(T_1, T_2) \cong (P_{\clq_\Theta} M_{z_1}|_{\clq_{\Theta}}, P_{\clq_\Theta} M_{z_2}|_{\clq_\Theta}),
\]
if and only if the pair $(T_1, T_2)$ is Brehmer and
\[
(I_{\clh}-T_1^*T_1)(I_{\clh}-T_2^*T_2)=0.
\]
Here $\clq_\Theta := H^2_{\cld}(\D^n)/\Theta H^2_{\cle_*}(\D^n)$ is the Beurling quotient module of $H^2_{\cld}(\D^2)$ corresponding to the inner function $\Theta \in H^\infty_{\clb(\cle_*, \cld)}(\D^n)$. This is the main content of Theorem \ref{characteristic function}.

Section \ref{sect: inv sub} deals with factorizations of inner functions in $H^\infty_{\clb(\cle_*, \cle)}(\D^n)$ and invariant subspaces of tuples of module operators. We briefly explain the main content of Section \ref{sect: inv sub} when $\cle = \mathbb{C}$ and $n=2$. The starting point is the following one variable result (see Sz.-Nagy and Foias, and Bercovici \cite{HB,NF}), which connects invariant subspaces of module operators with factorizations of the corresponding inner functions:

\NI Let $\theta\in H^\infty(\D)$ be an inner function. Then $T_{\theta} := P_{\clq_{\theta}} M_z|_{\clq_{\theta}}$ has an invariant subspace if and only if there exist inner functions $\vp$ and $\psi$ in $H^\infty(\D)$ such that
\[
\theta =\vp \psi.
\]
However, in the case of $H^\infty(\D^2)$, the existence of joint invariant subspaces is not sufficient to ensure factorizations of inner functions (see Example \ref{example: not factorable}). Theorem \ref{inv subsp in D^n} deals with this missing link: Let $\theta\in H^\infty(\D^2)$ be an inner function, $\clq_{\theta} = H^2(\D^2)/\theta H^2(\D^2)$, and let $T_\theta = (P_{\clq_{\theta}} M_{z_1}|_{\clq_{\theta}}, P_{\clq_{\theta}} M_{z_2}|_{\clq_{\theta}})$ denote the pair of module operators. The following are equivalent.
\begin{enumerate}
\item $\theta = \vp \psi$ for some inner functions $\vp, \psi \in H^\infty(\D^2)$.

\item There exists a joint $T_{\theta}$-invariant subspace $\clm \subseteq \clq_{\theta}$ such that $\clm\oplus {\theta}H^2(\D^2)$ is a Beurling submodule of $H^2(\D^2)$.

\item There exists a joint $T_{\theta}$-invariant subspace $\clm \subseteq \clq_{\theta}$ such that 
    \[
    (I-C_1^*C_1)(I-C_2^*C_2)=0,
    \]
where $C_i = P_{\clq_{\theta}\ominus \clm}M_{z_i}|_{\clq_{\theta}\ominus\clm}$ and $i = 1,2$.
\end{enumerate}

In Corollary \ref{cor: unique}, we prove that nontrivial factorizations is equivalent to the existence of nontrivial invariant subspaces of tuples of module operators.

All the Hilbert spaces considered in this paper are assumed to be complex and separable. Given Hilbert spaces $\cle$ and $\cle_*$, we denote by $\clb(\cle_*, \cle)$ (or simply by $\clb(\cle)$ if $\cle_* = \cle$) the Banach space of all bounded linear operators from $\cle_*$ to $\cle$. We say that two $n$-tuples $T=(T_1, \ldots, T_n)$ on $\clh$ and $R = (R_1, \ldots, R_n)$ on $\clk$ are unitarily equivalent (which we denote by $T \cong R$) if there exists a unitary $U \in \clb(\clh, \clk)$ such that $UT_i = R_iU$ for all $i=1, \ldots, n$.

Finally, it is worth pointing out that the general definition of a Brehmer pair $(T_1, T_2)$ (or an $n$-tuple) does not assume that the $T_i$'s are pure. Here the restricted definition fits well with the analytical model framework for our class of operators.

\newsection{Proof of Theorem \ref{Char dc in terms of compre}}\label{sec: main theorem}

Throughout this section we fix a Hilbert space $\cle$ and a quotient module $\clq$ of $\he$. We denote by $\cls$ the submodule $\clq^\perp$, that is
\[
\cls = \he \ominus \clq \cong \he/\clq.
\]
In order to shorten some of our computations, we will use the standard notation of cross-commutators: $[T_1,T_2]:= T_1 T_2 - T_2 T_1$ whenever $T_1$ and $T_2$ are bounded linear operators on some Hilbert space.

Now, by definition, $\clq$ is a Beurling quotient module if and only if there exist a Hilbert space $\cle_*$ and an inner function $\Theta \in H^\infty_{\clb(\cle_*, \cle)}(\D^n)$ such that $\cls = \Theta H^2_{\cle_*}(\D^n)$, which, by \cite{Mand, SSW}, equivalent to the condition that $[R_{z_j}^*, R_{z_i}] = 0$ for all $i\neq j$, where $R_{z_r} = M_{z_r}|_{\cls}$, and $r=1,\ldots,n$. Then we have the following interpretation of Theorem \ref{Char dc in terms of compre}:

\begin{lem}\label{lemma: reduction}
For each $i$ and  $j$ in $\{1, \ldots,n\}$, define
\[
X_{ij} = P_{\cls} \mi P_{\clq} \mj^* P_{\cls}.
\]
Then $\clq$ is a Beurling quotient module if and only if $X_{ij} = 0$ for all $i \neq j$.
\end{lem}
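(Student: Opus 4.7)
The plan is to identify $X_{ij}$, up to an obvious zero extension, with the cross-commutator $[R_{z_j}^*, R_{z_i}]$ on the submodule $\cls$, and then invoke the Mandrekar / Sarkar--Sasane--Wick criterion already recorded in the paragraph above: $\cls$ is a Beurling submodule if and only if $[R_{z_j}^*, R_{z_i}] = 0$ on $\cls$ for all $i \neq j$. Thus the lemma reduces to comparing two expressions built from the same ingredients.

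Two elementary facts drive the computation. First, because $\cls$ is a submodule, $\mi \cls \subseteq \cls$, so $R_{z_i} = \mi|_{\cls}$, $R_{z_j}^* = P_{\cls}\mj^*|_{\cls}$, and the invariance of $\cls$ translates to $P_{\clq} \mi P_{\cls} = 0$. Second, the coordinate multipliers on $\he$ are doubly commuting, so $[\mi, \mj^*] = 0$ whenever $i \neq j$.

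With these in hand, I would compute, for $f \in \cls$ and $i \neq j$,
\[
[R_{z_j}^*, R_{z_i}] f = P_{\cls} \mj^* \mi f - \mi P_{\cls} \mj^* f = \bigl(P_{\cls} \mi - \mi P_{\cls}\bigr) \mj^* f,
\]
using $\mj^*\mi = \mi \mj^*$ in the second step. Inserting $I = P_{\clq} + P_{\cls}$ on either side of $\mi$ in the bracket yields
\[
P_{\cls} \mi - \mi P_{\cls} = P_{\cls} \mi P_{\clq} - P_{\clq} \mi P_{\cls} = P_{\cls} \mi P_{\clq},
\]
the last equality by the submodule invariance. Therefore $[R_{z_j}^*, R_{z_i}] f = P_{\cls} \mi P_{\clq} \mj^* f = X_{ij} f$ for every $f \in \cls$, while $X_{ij}$ annihilates $\clq$ by virtue of the trailing $P_{\cls}$. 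Consequently $X_{ij} = 0$ on $\he$ if and only if $[R_{z_j}^*, R_{z_i}] = 0$ on $\cls$, and the Mandrekar / Sarkar--Sasane--Wick criterion finishes the proof.

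The argument is essentially bookkeeping, so there is no substantive obstacle; the one delicate point is to deploy the double commutativity $[\mi, \mj^*] = 0$ and the submodule invariance $P_{\clq} \mi P_{\cls} = 0$ at the right moments, which together collapse the cross-commutator to the single-term expression $P_{\cls} \mi P_{\clq} \mj^*$.
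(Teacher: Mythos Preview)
Your proof is correct and follows essentially the same route as the paper: both reduce the lemma to the Mandrekar / Sarkar--Sasane--Wick criterion by showing that $[R_{z_j}^*, R_{z_i}]$ on $\cls$ coincides with $P_{\cls}\mi P_{\clq}\mj^*|_{\cls}$, using the doubly commuting property and $P_{\clq} = I - P_{\cls}$. The paper's computation is marginally shorter (it writes $R_{z_i}R_{z_j}^* = P_{\cls}\mi P_{\cls}\mj^*|_{\cls}$ and subtracts directly), whereas you route through the identity $P_{\cls}\mi - \mi P_{\cls} = P_{\cls}\mi P_{\clq}$, but the substance is identical.
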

\begin{proof}
Suppose $i \neq j$. Since $M_{z_i}M_{z_j}^* = M_{z_j}^*M_{z_i}$ and $I_{\he} - \ps = \pq$, it follows that
\[
[R_{z_j}^*, R_{z_i}] = R_{z_j}^{*}R_{z_i} - R_{z_i} R_{z_j}^{*} = P_{\cls} M_{z_{j}}^{*}M_{z_{i}}|_{\cls} - \ps M_{z_{i}}P_{\cls}M_{z_{j}}^{*}|_{\cls} = P_{\cls}M_{z_{i}}P_{\mathcal{Q}}M_{z_{j}}^{*}|_{\cls}.
\]
Therefore, $[R_{z_j}^*, R_{z_i}]=0$ if and only if $(P_{\cls}M_{z_{i}}P_{\mathcal{Q}}M_{z_{j}}^{*})|_{\cls} = 0$, which is equivalent to $X_{ij} = (P_{\cls}M_{z_{i}}P_{\mathcal{Q}}M_{z_{j}}^{*})P_{\cls} = 0$.
\end{proof}

It is often convenient to work with $P_{\clq} M_{z_i} P_{\clq} \in \clb(\he)$, which we will denote by $C_i$, that is
\[
C_i = P_{\clq} M_{z_i} P_{\clq} \in \clb(\he) \qquad (i=1, \ldots, n).
\]
Observe that $C = (P_{\clq} M_{z_1} P_{\clq}, \ldots, P_{\clq} M_{z_n} P_{\clq})$ is an $n$-tuple of commuting contractions on $\he$ (or, equivalently, $C$ defines a contractive $\mathbb{C}[z_1, \ldots, z_n]$-Hilbert module structure on $\he$), and 
\[
C_i|_{\clq} = \ci \text{ and } C_i^*|_{\clq} = \ci^*,
\]
for all $i=1, \ldots, n$. Finally, to shorten notation we set $T^k = T_1^{k_1} \cdots T_n^{k_n}$ whenever $T = (T_1, \ldots, T_n)$ is a commuting tuple on some Hilbert space and $k = (k_1, \ldots, k_n) \in \Z_+^n$.

\textsf{For the rest of this section, we fix $i$ and $j$ from $\{1, \ldots, n\}$, and assume that $i \neq j$.} In what follows, we will use $\hat{k}_i$ ($\hat{k}_j$) to denote multi-indices in $\Z_+^n$ whose $i$-th ($j$-th) slot has zero entry. The following lemma will play a key role.

\begin{lem}\label{lemma: [Ci, C ki]}
$[C_i, C^{*\hat{k}_i}] = \pq M_z^{*\hat{k}_i} \ps M_{z_i} \pq$ for all $\hat{k}_i \in \Z_+^n \setminus \{0\}$.
\end{lem}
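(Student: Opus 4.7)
The plan is to first simplify $C^{*\hat k_i}$ using the quotient module structure, then expand $[C_i,C^{*\hat k_i}]$ on $\clq$ by splitting the action of $M_{z_i}$ along $\he = \clq \oplus \cls$, and finally handle $\cls$ by a one-line observation.

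First I would note that since $\clq$ is a quotient module, $M_{z_r}^*$ leaves $\clq$ invariant, so $P_\clq M_{z_r}^* P_\clq = M_{z_r}^* P_\clq$, which immediately gives $C_r^* = M_{z_r}^* P_\clq$ for every $r$. Iterating this (the range of $P_\clq$ lies in $\clq$, which is invariant under each $M_{z_r}^*$, so every intermediate $P_\clq$ between the $M_{z_r}^*$'s can be absorbed) yields the clean identity
\[
C^{*\hat k_i} = M_z^{*\hat k_i} P_\clq.
\]

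Next I would fix $q \in \clq$ and compute both sides of the commutator. Using $P_\clq q = q$, the invariance $M_z^{*\hat k_i} q \in \clq$, and the commutation $[M_{z_i}, M_z^{*\hat k_i}] = 0$ (valid because the $i$-th slot of $\hat k_i$ is zero), I get
\[
C_i C^{*\hat k_i} q = P_\clq M_{z_i}\, M_z^{*\hat k_i} q = P_\clq M_z^{*\hat k_i} M_{z_i} q.
\]
Decomposing $M_{z_i} q = C_i q + P_\cls M_{z_i} q$ along $\clq \oplus \cls$ and using $M_z^{*\hat k_i} C_i q \in \clq$, this becomes $M_z^{*\hat k_i} C_i q + P_\clq M_z^{*\hat k_i} P_\cls M_{z_i} q$. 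On the other hand, $C^{*\hat k_i} C_i q = M_z^{*\hat k_i} P_\clq C_i q = M_z^{*\hat k_i} C_i q$. Subtracting gives
\[
[C_i, C^{*\hat k_i}]\, q = P_\clq M_z^{*\hat k_i} P_\cls M_{z_i} P_\clq q,
\]
which is precisely the claimed formula on $\clq$.

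Finally, I would handle $\cls$: both $C_i = P_\clq M_{z_i} P_\clq$ and $C^{*\hat k_i} = M_z^{*\hat k_i} P_\clq$ carry a rightmost $P_\clq$ factor and hence annihilate $\cls$, so $[C_i, C^{*\hat k_i}]$ vanishes on $\cls$; the right-hand side $\pq M_z^{*\hat k_i} \ps M_{z_i} \pq$ vanishes on $\cls$ for the same reason. Combining the two cases on the orthogonal decomposition $\he = \clq \oplus \cls$ proves the lemma. I do not anticipate a real obstacle; the only delicate point is the systematic bookkeeping of which $P_\clq$'s may be dropped, and that is driven entirely by the single fact $M_z^{*\hat k_i} \clq \subseteq \clq$.
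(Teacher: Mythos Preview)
Your proof is correct and follows essentially the same approach as the paper: both use the identity $C^{*\hat{k}_i} = M_z^{*\hat{k}_i} P_{\clq}$, the commutation $M_{z_i} M_z^{*\hat{k}_i} = M_z^{*\hat{k}_i} M_{z_i}$, and the splitting $P_{\clq} = I - P_{\cls}$ (which is exactly your decomposition $M_{z_i} q = C_i q + P_{\cls} M_{z_i} q$). The only cosmetic difference is that the paper argues at the operator level in one line, whereas you evaluate on vectors $q\in\clq$ and then separately check vanishing on $\cls$; your extra check on $\cls$ is harmless but unnecessary, since the operator identity with a rightmost $P_{\clq}$ already forces vanishing there.
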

\begin{proof}
First notice that $C^{*l} = M_{z}^{*l} \pq$ and $C^{l} = \pq M_{z}^{l}$ for all $l \in \Z_+^n$. Since $[C_i, C^{*\hat{k}_i}] = C_i C^{*\hat{k}_i} - C^{*\hat{k}_i} C_i$, it follows that
\[
[C_i, C^{*\hat{k}_i}]  = \pq M_{z_i} M_{z}^{*\hat{k}_i} \pq - \pq M_{z}^{*\hat{k}_i} \pq M_{z_i} \pq.
\]
Then, writing $\pq = I_{\he} - \ps$ into the middle of the second term on the right side and using $M_{z}^{*\hat{k}_i} M_{z_i} = M_{z_i} M_{z}^{*\hat{k}_i}$, we get the desired equality.
\end{proof}

For each $t = 1, \ldots, n$, we set $D_{C_t} = (\pq - C_{t}^* C_{t})^{\frac{1}{2}}$. Since
\[
C_{t}^* C_{t} =\pq M_{z_t}^* \pq M_{z_t}\pq = \pq M_{z_t}^* (I_{\he} - \ps) M_{z_t}\pq = \pq - \pq M_{z_t}^* \ps M_{z_t}\pq,
\]
that $D_{C_{t}}$ is well defined follows from the fact that
\begin{equation}\label{eqn: P_Q - CC*}
\pq - C_{t}^* C_{t} = \pq M_{z_t}^* \ps M_{z_t}\pq \geq 0.
\end{equation}
We now recall a classical result due to R. Douglas \cite{Douglas}. Let $A$ and $B$ be contractions on a Hilbert space $\clh$. The Douglas's range and inclusion theorem then says that $AA^* \leq BB^*$ if and only if there exists a contraction $X$ such that $A=BX$. We are now ready for the third key lemma of this section.

\begin{lem}\label{lemma: factor}
Suppose $\hat{k}_i \in \Z_+^n \setminus \{0\}$. There exist contractions $X_{\hat{k}_i}$ and $Y_{\hat{k}_i}$ in $\clb(\clq)$ such that
\[
[C_i, C^{*\hat{k}_i}] = X_{\hat{k}_i} D_{C_i} \mbox{~and~} [C^{\hat{k}_i}, C_i^*] = D_{C_i} Y_{\hat{k}_i}.
\]
\end{lem}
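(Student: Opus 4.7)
The plan is to deduce both factorizations in one stroke via Douglas's range-inclusion theorem (stated just before the lemma) together with the explicit commutator formula of Lemma \ref{lemma: [Ci, C ki]}. Setting $L := [C_i, C^{*\hat{k}_i}]$, Lemma \ref{lemma: [Ci, C ki]} gives $L = \pq M_z^{*\hat{k}_i} \ps M_{z_i} \pq$, while (\ref{eqn: P_Q - CC*}) gives $D_{C_i}^2 = \pq M_{z_i}^* \ps M_{z_i} \pq$. A factorization $L = X_{\hat{k}_i} D_{C_i}$ with $X_{\hat{k}_i}$ a contraction is, via adjoints, equivalent to Douglas's criterion $L^* L \le D_{C_i}^2$, and the same inequality simultaneously yields $L^* = D_{C_i} Y_{\hat{k}_i}$ with $Y_{\hat{k}_i} = X_{\hat{k}_i}^*$. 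So the whole proof reduces to the single operator inequality
\[
L^* L \le D_{C_i}^2.
\]

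To establish this, introduce $W := \ps M_{z_i} \pq$, so that $W^* W = D_{C_i}^2$ directly, and
\[
L^* L = W^* \bigl(M_z^{\hat{k}_i} \pq M_z^{*\hat{k}_i}\bigr) W.
\]
The plan is then to observe that $M_z^{\hat{k}_i}$ is a product of commuting isometries on $\he$, hence itself an isometry, so $M_z^{\hat{k}_i} M_z^{*\hat{k}_i}$ is the orthogonal projection onto $\mathrm{ran}\,M_z^{\hat{k}_i}$ and in particular $\le I_{\he}$. Sandwiching $\pq \le I_{\he}$ between $M_z^{\hat{k}_i}$ and $M_z^{*\hat{k}_i}$ therefore yields $M_z^{\hat{k}_i}\pq M_z^{*\hat{k}_i} \le I_{\he}$, whence $L^* L \le W^* W = D_{C_i}^2$, as required. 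Douglas's theorem then furnishes a contraction $Y_{\hat{k}_i}$ with $L^* = D_{C_i} Y_{\hat{k}_i}$, and setting $X_{\hat{k}_i} := Y_{\hat{k}_i}^*$ gives the first identity.

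A small piece of bookkeeping remains: the statement asks for $X_{\hat{k}_i}, Y_{\hat{k}_i} \in \clb(\clq)$, whereas the Douglas construction naturally produces operators on $\he$. This is harmless because $L$ and $D_{C_i}$ both vanish on $\cls$ (each ends with $\pq$), so the Douglas contractions can be declared zero on $\cls$ and thus restricted to $\clb(\clq)$ without loss. I do not foresee any serious obstacle here — the crucial observation is the slightly surprising fact that the middle factor $M_z^{\hat{k}_i}\pq M_z^{*\hat{k}_i}$ is ``absorbed'' by $I_{\he}$, which is precisely what allows the mixed cross-commutator on the left to be controlled by the single-variable defect $D_{C_i}$ on the right, rather than by a joint defect of the whole tuple.
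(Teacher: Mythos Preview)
Your proof is correct and essentially the same as the paper's: both establish $L^*L \le D_{C_i}^2$ by writing the difference as $W^*(I_{\he} - M_z^{\hat{k}_i}\pq M_z^{*\hat{k}_i})W$ with $W = \ps M_{z_i}\pq$ and observing that the middle factor is positive because $M_z^{\hat{k}_i}\pq$ is a contraction, then invoke Douglas and take adjoints. Your bookkeeping about restricting from $\clb(\he)$ to $\clb(\clq)$ is actually a bit more explicit than the paper's, which leaves that point implicit.
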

\begin{proof}
We already know that $D_{C_i}^2 = \pq - C_{t}^* C_{t} = \pq M_{z_i}^* \ps M_{z_i}\pq$. Then, by Lemma \ref{lemma: [Ci, C ki]}, we have
\[
\begin{split}
D_{C_i}^2 - [C_i, C^{*\hat{k}_i}]^* [C_i, C^{*\hat{k}_i}] & = \pq M_{z_i}^* \ps M_{z_i}\pq - (\pq M_{z_i}^* \ps M_z^{\hat{k}_i}) \pq (M_z^{*\hat{k}_i} \ps M_{z_i} \pq)
\\
& = \pq M_{z_i}^* \ps (I_{\he} -  M_z^{\hat{k}_i} \pq  M_z^{*\hat{k}_i})\ps M_{z_i} \pq
\\
& = (\pq M_{z_i}^* \ps) (I_{\he} -  M_z^{\hat{k}_i} \pq  M_z^{*\hat{k}_i}) (\pq M_{z_i}^* \ps)^*.
\end{split}
\]
Since $M_z^{\hat{k}_i} \pq$ is a contraction, it follows that $I_{\he} -  M_z^{\hat{k}_i} \pq  M_z^{*\hat{k}_i} \geq 0$, and hence
\[
D_{C_i}^2 - [C_i, C^{*\hat{k}_i}]^* [C_i, C^{*\hat{k}_i}] \geq 0.
\]
Then the first equality is an immediate consequence of the Douglas's range and inclusion theorem. Finally, since $[C_i, C^{*\hat{k}_i}]^* = [C^{\hat{k}_i}, C_i^*]$, the second equality follows from the first.
\end{proof}

The final ingredient is the following result. Again recall that $X_{ij} = \ps M_{z_i} \pq M_{z_j}^* \ps$ (see Lemma \ref{lemma: reduction}).

\begin{lem}\label{lemma: 3 zero}
If $D_{C_i} D_{C_j} = 0$, then, for each $\hat{k}_i, \hat{l}_j \in \Z_+^n \setminus \{0\}$,
\begin{enumerate}
\item $\pq M_{z}^{*\hat{k}_i} X_{ij} M_{z}^{\hat{l}_j} \pq = 0$,
\item $\pq M_{z_i}^* X_{ij} M_{z}^{\hat{l}_j} \pq = 0$, and
\item $\pq M_{z}^{*\hat{k}_i} X_{ij} M_{z_j} \pq = 0$.
\end{enumerate}
\end{lem}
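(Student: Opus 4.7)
The plan is to substitute the definition $X_{ij} = \ps \mi \pq \mj^* \ps$ into each of the three expressions, split the resulting string of operators at the middle $\pq$ (using the idempotence $\pq = \pq\pq$), and recognize each of the two resulting blocks as an object already analyzed in Lemmas \ref{lemma: [Ci, C ki]} and \ref{lemma: factor}. The hypothesis $D_{C_i} D_{C_j} = 0$ will then collapse the inner portion of the product to zero.

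For (1), I would expand
$$\pq M_{z}^{*\hat{k}_i} X_{ij} M_{z}^{\hat{l}_j} \pq = \big(\pq M_z^{*\hat{k}_i} \ps \mi \pq\big)\big(\pq \mj^* \ps M_z^{\hat{l}_j} \pq\big).$$
By Lemma \ref{lemma: [Ci, C ki]} the left block equals $[C_i, C^{*\hat{k}_i}]$, while taking adjoints in that same lemma (applied now to $j$ and $\hat{l}_j$) identifies the right block as $[C^{\hat{l}_j}, C_j^*]$. Lemma \ref{lemma: factor} then supplies contractions in $\clb(\clq)$ with $[C_i, C^{*\hat{k}_i}] = X_{\hat{k}_i} D_{C_i}$ and $[C^{\hat{l}_j}, C_j^*] = D_{C_j} Y_{\hat{l}_j}$, so the product reads $X_{\hat{k}_i}\, D_{C_i} D_{C_j}\, Y_{\hat{l}_j}$, which vanishes by hypothesis.

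Items (2) and (3) will follow by the same template, only with one of the two blocks already being a square of a defect. For (2), the left block simplifies immediately to $D_{C_i}^2$ by \eqref{eqn: P_Q - CC*}, and only the right block needs Lemma \ref{lemma: factor}, yielding $D_{C_i}\,(D_{C_i} D_{C_j})\, Y_{\hat{l}_j} = 0$. Item (3) is the mirror image: the right block collapses to $D_{C_j}^2$, so the product becomes $X_{\hat{k}_i}\,(D_{C_i} D_{C_j})\, D_{C_j} = 0$.

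I do not anticipate any substantive obstacle: once the correct splitting is chosen, the lemma is essentially a corollary of the factorization built into Lemma \ref{lemma: factor} combined with the hypothesis. The only point requiring care is to verify that the middle projection $\pq$ in $X_{ij}$ can indeed be used to split each of the three expressions into two blocks of the exact form treated by Lemmas \ref{lemma: [Ci, C ki]} and \ref{lemma: factor}, so that the factorizations of the commutators can be applied in tandem.
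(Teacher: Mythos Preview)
Your proposal is correct and is essentially the same argument as in the paper: both proofs recognize each of the three expressions as a product of two blocks of the form $[C_i, C^{*\hat{k}_i}]$, $[C^{\hat{l}_j}, C_j^*]$, or $D_{C_t}^2$, and then invoke the factorizations of Lemma \ref{lemma: factor} together with $D_{C_i}D_{C_j}=0$ to kill the product. The only cosmetic difference is that you split at the middle $\pq$ of $X_{ij}$ explicitly, whereas the paper starts from the commutator side and computes toward $X_{ij}$.
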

\begin{proof}
By Lemma \ref{lemma: factor}, we have on one hand $[C_i, C^{*\hat{k}_i}] [C_j, C^{*\hat{l}_j}]^* = 0$, and on the other hand, by Lemma \ref{lemma: [Ci, C ki]},
\[
\begin{split}
[C_i, C^{*\hat{k}_i}] [C_j, C^{*\hat{l}_j}]^* & = (\pq M_z^{*\hat{k}_i} \ps M_{z_i}) \pq (M_{z_j}^* \ps M_z^{\hat{l}_j} \pq)
\\
& = \pq M_z^{*\hat{k}_i} X_{ij} M_z^{\hat{l}_j} \pq.
\end{split}
\]
This proves $(1)$. To verify $(2)$, first observe that \eqref{eqn: P_Q - CC*} implies
\[
(\pq - C_i^* C_i)[C^{\hat{l}_j}, C_j^*] = (\pq M_{z_i}^* \ps M_{z_i}\pq) [C_j, C^{*\hat{l}_j}]^*.
\]
By Lemma \ref{lemma: [Ci, C ki]}, we can write $[C_j, C^{*\hat{l}_j}]^* = \pq M_{z_j}^* \ps M_z^{\hat{l}_j} \pq$, where, on the other hand, Lemma \ref{lemma: factor} implies that 
\[
(\pq - C_i^* C_i)[C^{\hat{l}_j}, C_j^*] = 0.
\]
Therefore
\[
0 = (\pq M_{z_i}^* \ps M_{z_i}\pq) (\pq M_{z_j}^* \ps M_z^{\hat{l}_j} \pq) = \pq M_{z_i}^* (\ps M_{z_i}\pq M_{z_j}^* \ps) M_z^{\hat{l}_j} \pq,
\]
which proves $(2)$. The proof of $(3)$ is similar to that of $(2)$: We first observe that
\[
[C_i, C^{*\hat{k}_i}] (\pq - C_j^* C_j) = \pq M_{z}^{*\hat{k}_i} X_{ij} M_{z_j} \pq
\]
whereas Lemma \ref{lemma: factor} implies that $[C_i, C^{*\hat{k}_i}] (\pq - C_j^* C_j) = 0$.
\end{proof}

We also need the following simple observation: $\clq$ reduces $(M_{z_1}^*P_{\cls}M_{z_1}, \ldots, M_{z_n}^*P_{\cls}M_{z_n})$, that is
\begin{equation}\label{eqn: Q reduces}
\pq (M_{z_t}^*P_{\cls}M_{z_t}) = (M_{z_t}^*P_{\cls}M_{z_t}) \pq \qquad (t=1,\ldots,n).
\end{equation}
Indeed, for a fixed $t$ in $\{1,\ldots, n\}$, writing $\pq = I_{\he} - \ps$, we see that
\[
\pq (M_{z_t}^*P_{\cls}M_{z_t}) \pq = M_{z_t}^*P_{\cls}M_{z_t} \pq - \ps M_{z_t}^*P_{\cls}M_{z_t} \pq,
\]
and, on the other hand, $\ps M_{z_t}^*P_{\cls} = \ps M_{z_t}^*$ and $M_{z_t}^* M_{z_t} = I_{\he}$ implies that
\[
\ps M_{z_t}^*P_{\cls}M_{z_t} \pq = \ps M_{z_t}^* M_{z_t} \pq = \ps \pq = 0.
\]
That is, $\pq (M_{z_t}^*P_{\cls}M_{z_t}) \pq = (M_{z_t}^*P_{\cls}M_{z_t}) \pq$. Then the claim follows from the fact that $M_{z_t}^*P_{\cls}M_{z_t}$ is a self-adjoint operator.

Now we are ready to plunge into the main body of the proof of Theorem \ref{Char dc in terms of compre}.

\smallskip

\begin{proof}[Proof of Theorem \ref{Char dc in terms of compre}]
Suppose $\clq$ is a Beurling quotient module. Then there exist a Hilbert space $\cle_*$ and an inner function $\Theta\in H^{\infty}_{\clb(\cle_*,\cle)}(\D^n)$ such that $\cls=\Theta H^2_{\clf}(\D^n)$ (see the discussion preceding Lemma \ref{lemma: reduction}). Then $\ps = M_{\Theta} M_{\Theta}^*$. Now \eqref{eqn: P_Q - CC*} and \eqref{eqn: Q reduces} implies that
\[
P_{\clq}-C_t^*C_t = M_{z_t}^*P_{\cls}M_{z_t} P_{\clq} \qquad (t=1,\ldots,n).
\]
Therefore by applying \eqref{eqn: Q reduces} again we obtain
\[
(P_{\clq}-C_i^*C_i)(P_{\clq}-C_j^*C_j)= (M_{z_i}^*P_{\cls}M_{z_i}) \pq (M_{z_j}^*P_{\cls}M_{z_j})P_{\clq} = M_{z_i}^*P_{\cls}M_{z_i} M_{z_j}^*P_{\cls}M_{z_j} P_{\clq}.
\]
We know by $M_{\Theta}^* M_{\Theta} = I_{\he}$ and $M_t M_{\Theta} = M_{\Theta} M_t$ for all $t = 1, \ldots, n$, that $M_{\Theta}^* \mj^* \mi M_{\Theta} = \mj^* \mi$. Then $\ps = M_{\Theta} M_{\Theta}^*$ implies that $\ps \mj^* \mi \ps  = M_{\Theta} \mj^* \mi M_{\Theta}^* = M_{\Theta} \mi \mj^* M_{\Theta}^*$, and hence
\[
 M_{z_i}^*P_{\cls}M_{z_i} M_{z_j}^*P_{\cls}M_{z_j} P_{\clq} = M_{z_i}^* M_{\Theta} \mi \mj^* M_{\Theta}^* M_{z_j} P_{\clq} = M_{\Theta} M_{\Theta}^* \pq = 0,
\]
which yields $(P_{\clq}-C_i^*C_i)(P_{\clq}-C_j^*C_j) = 0$. Thus we obtain
\[
(I_{\clq} - \ci^* \ci)(I_{\clq}- \cj^* \cj) = (P_{\clq}-C_i^*C_i)(P_{\clq}-C_j^*C_j)|_{\clq} = 0.
\]
Now we turn to the converse. By taking into account Lemma \ref{lemma: reduction}, what we have to show is that $X_{ij} = 0$. We now describe a multi-step reduction process that reduces this claim to Lemma \ref{lemma: 3 zero}. First observe that $\overline{\text{span}}\{ z^{k}\clq: k \in \mathbb{Z}^n_{+}\}$ reduces $M_{z_t}$ for all $t=1, \ldots, n$. Then there exists a closed subspace $\cle_1$ of $\cle$ such that 
\[
\overline{\text{span}}\{ z^{k}\clq: k \in \mathbb{Z}^n_{+}\} = H^2_{\cle_1}(\D^n).
\]
By setting $\cle_0 = \cle \ominus \cle_1$, it follows that
\[
H^2_{\cle}(\D^n) = \overline{\text{span}}\{ z^{k}\clq: k \in \mathbb{Z}^n_{+}\} \oplus H^2_{\cle_0}(\D^n).
\]
Since $\clq^\perp = \cls \supseteq H^2_{\cle_0}(\D^n)$, for each $f \in H^2_{\cle_0}(\D^n)$, we have $\pq M_{z_i}^* \ps f = \pq M_{z_i}^* f = 0$, as $H^2_{\cle_0}(\D^n)$ reduces $M_{z_i}$. This proves that
\[
X_{ij}|_{H^2_{\cle_0}(\D^n)} = 0.
\]
So we only need to check that $X_{ij}|_{\overline{\text{span}}\{ z^{k}\clq: k \in \mathbb{Z}^n_{+}\}} = 0$, or, equivalently
\[
X_{ij} M_z^l \pq = 0 \qquad (l \in \Z_+^n).
\]
Since $X_{ij} = \ps M_{z_i} \pq M_{z_j}^* \ps$ (see the definition of $X_{ij}$ in Lemma \ref{lemma: reduction}), we only need to consider $l \in \Z_+^n \setminus \{0\}$. Moreover, for each $f_0 \in  H^2_{\cle_0}(\D^n)$, since $M_{z_i}^* f_0 \in \cls$, it follows that
\[
\langle X_{ij} M_z^l f, f_0 \rangle =
\langle \ps M_{z_i} \pq M_{z_j}^* \ps M_z^l f, f_0 \rangle = \langle \pq M_{z_j}^* \ps M_z^l f, M_{z_i}^* f_0 \rangle = 0,
\]
for all $f \in \clq$ and $l \in \Z_+^n \setminus \{0\}$. Therefore, it suffices to prove that
\[
X_{ij} M_z^l \clq \perp M_z^k \clq \qquad (l \in \Z_+^n \setminus \{0\}, k \in \Z_+^n).
\]
Note that the case $k = 0$ is trivial since $\mbox{ran} X_{ij} \subseteq \cls$. Hence, we are reduced to showing that
\begin{equation}\label{eqn: final claim}
\pq M_z^{*k} X_{ij} M_z^l \pq = 0 \qquad (k, l \in \Z_+^n\setminus \{0\}).
\end{equation}
To prove this in full generality, we start with $k = e_i$ and $l = e_j$, where $e_i$ and $e_j$ are the multiindices with $1$ in the $i$- th and $j$-th slot, respectively, and zero elsewhere. In this case, we prove a little bit more, namely
\[
M_{z_i}^* X_{ij} M_{z_j} = 0.
\]
We proceed as follows: By applying \eqref{eqn: P_Q - CC*} twice we obtain
\[
\begin{split}
(\pq - C_i^* C_i) (\pq - C_j C_j^*) & = (\pq \mi^* \ps \mi \pq)(\pq \mj^* \ps \mj \pq)
\\
& = \pq (\mi^* \ps \mi) \pq (\mj^* \ps \mj) \pq
\end{split}
\]
Since $(\pq - C_i^* C_i) (\pq - C_j C_j^*) = 0$, by assumption, \eqref{eqn: Q reduces} implies that
\[
0 = \pq (\mi^* \ps \mi) \pq (\mj^* \ps \mj) \pq = (\mi^* \ps \mi)\pq (\mj^* \ps \mj) = \mi^* X_{ij} \mj,
\]
which proves the desired identity. In particular, \eqref{eqn: final claim} holds whenever $k, l \in \Z_+^n\setminus \{0\}$ and $k_i, l_j \neq 0$. Now let us consider the remaining cases: $k, l \in \Z_+^n\setminus \{0\}$, where

\noindent Case 1: $k_i = l_j = 0$,

\noindent Case 2: $k_i \neq 0$ and $l_j = 0$, and

\noindent Case 3: $k_i = 0$ and $l_j \neq 0$.

\noindent The first case simply follows from part $(1)$ of Lemma \ref{lemma: 3 zero}. For the remaining cases, we fix $k, l \in \Z_+^n \setminus \{0\}$. By \eqref{eqn: Q reduces} we have
\[
\pq M_z^{*k} \mi^* (\ps \mi \pq) = \pq M_z^{*k} (\mi^* \ps \mi) \pq = \pq M_z^{*k} \pq(\mi^* \ps \mi) \pq.
\]
Therefore, $\pq M_z^{*k} \mi^* (\ps \mi \pq) = (\pq M_z^{*k}) \pq \mi^* (\ps \mi \pq)$, from which it immediately follows that
\[
\pq M_z^{*k} \mi^* X_{ij} M_z^l \pq = (\pq M_z^{*k}) (\pq \mi^* X_{ij} M_z^l \pq),
\]
and similarly
\[
\pq M_z^{*k} X_{ij} \mj M_z^l \pq = (\pq M_z^{*k} X_{ij} \mj \pq) (M_z^l \pq).
\]
Then Case 2 and Case 3 follows from part $(2)$ and part $(3)$, respectively, of Lemma \ref{lemma: 3 zero}. This completes the proof that $\clq$ is a Beurling quotient module.
\end{proof}

\newsection{Isometric dilations}\label{sec: dilations}

This section is meant to complement the dilation theory of (a concrete class of) $n$-tuples of commuting contractions.

We begin with the definition of isometric dilations. Let $T=(T_{1},\ldots , T_{n})$ and $V = (V_1, \ldots, V_n)$ be commuting tuples of contractions and isometries on Hilbert spaces $\clh$ and $\clk$, respectively. We say that $V$ is an \textit{isometric dilation} of $T$ (or $T$ \textit{dilates} to $V$) if there exists an isometry $\Pi: \clh \raro \clk$ such that $\Pi T_i^* = V_i^* \Pi$ for all $i=1, \ldots, n$.

We will mostly restrict attention here to the case when $V$ is $(M_{z_1}, \ldots, M_{z_n})$ on $\he$ for some Hilbert space $\cle$. In fact, if $n=1$, then $T = (T)$ dilates to $M_z$ on $H^2_{\cle}(\D)$ for some Hilbert space $\cle$ if and only if $T$ is a pure contraction (recall again that an operator $X$ is pure if the sequence $\{X^{*m}\}_{m\geq 0}$ converges to $0$ in the strong operator topology). This deep result is due to Sz.-Nagy and C. Foias, and L. de Branges \cite{NF}. However, in sharp contrast, if $n=2$ ($n > 2$), then general $n$-tuples of pure commuting contractions do not dilate to $(M_{z_1}, M_{z_2})$ on vector-valued Hardy space over $\D^2$ (commuting tuples of isometries).

However, the multivariable situation is completely favorable in the case of Brehmer tuples (also popularly known as Szeg\"{o} tuples). Let $T=(T_{1},\ldots ,T_{n})$ be a commuting tuple of contractions on a Hilbert space $\clh$. We say that $T$ is \textit{Brehmer} if $T_i$ is pure for all $i=1, \ldots, n$, and
\[
\sum\limits_{F \subseteq \{1,\ldots ,n \}}(-1)^{|F|} T_{F}T^{*}_{F} \geq 0,
\]
where $|F|$ denotes the cardinality of $F$ and $T_{F}=\prod_{j\in F}T_{j}$ for all $F \subseteq \{1,\ldots ,n \}$. We set, by convention, that $T_{\emptyset}=I_{\mathcal{H}}$ and $|\emptyset|=0$. Given a Brehmer tuple $T$ on $\clh$, we define the defect operator and defect space of $T$ as
\[
D_{T^{*}}^{2} = \sum\limits_{F \subseteq \{1,\ldots ,n \}}(-1)^{|F|} T_{F}T^{*}_{F}, \text{~and~} \cld = \overline{\text{ran}}\,D_{T^*},
\]
respectively. The following is one of the most concrete multivariable dilation results \cite{CV, MV}:

\begin{thm}\label{canonical dilation}
If $T=(T_1,\ldots,T_n)$ is a Brehmer tuple on $\clh$, then $T$ dilates to $(M_{z_1}, \ldots, M_{z_n})$ on $H^2_{\cld}(\D^n)$.
\end{thm}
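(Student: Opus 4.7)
The plan is to construct the dilation via an explicit Poisson-type kernel. Define
\[
\Pi : \clh \to H^2_\cld(\D^n), \qquad \Pi h = \sum_{k \in \Z_+^n} z^k \otimes D_{T^*} T^{*k} h.
\]
Each summand lies in $\cld = \overline{\mathrm{ran}}\, D_{T^*}$, so once square-summability is verified $\Pi$ is a well-defined bounded map into $H^2_\cld(\D^n)$. The intertwining $\Pi T_i^* = M_{z_i}^* \Pi$ is then immediate: since the $T_j^*$'s commute, $T^{*k} T_i^* = T^{*(k+e_i)}$, so $\Pi T_i^* h = \sum_k z^k D_{T^*} T^{*(k+e_i)} h$, which equals $M_{z_i}^* \Pi h = \sum_{k: k_i \geq 1} z^{k-e_i} D_{T^*} T^{*k} h$ after reindexing.

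The substantive content of the theorem therefore reduces to the operator identity
\[
\sum_{k \in \Z_+^n} T^k D_{T^*}^2 T^{*k} = I_\clh \qquad (\text{SOT}),
\]
which forces $\|\Pi h\|^2 = \sum_k \|D_{T^*} T^{*k} h\|^2 = \|h\|^2$. The key algebraic input is the one-coordinate decomposition
\[
D_{T^*}^2 = X_i - T_i X_i T_i^*, \qquad X_i := \sum_{F \subseteq \{1,\ldots,n\} \setminus \{i\}} (-1)^{|F|} T_F T_F^*,
\]
obtained by splitting the Brehmer sum according to whether $i \in F$ and using $T_{F \cup \{i\}} T_{F \cup \{i\}}^* = T_i T_F T_F^* T_i^*$ (valid because the $T_j$'s commute among themselves, and the $T_j^*$'s commute among themselves). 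Substituting this into the $k_i$-slice yields the telescoping identity
\[
\sum_{k_i=0}^{N} T_i^{k_i} D_{T^*}^2 T_i^{*k_i} = X_i - T_i^{N+1} X_i T_i^{*(N+1)},
\]
and pureness of $T_i$ together with boundedness of $X_i$ makes the remainder vanish in SOT as $N \to \infty$.

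Iterating the same decomposition on $X_i$ strips off a further coordinate, and after all $n$ passes the residual constant is $X_{\{1,\ldots,n\}} = \sum_{F \subseteq \emptyset} (-1)^{|F|} T_F T_F^* = I_\clh$. The main subtlety is ordering: $T_i$ and $T_j^*$ do not commute in general, so at each stage only like-type factors may be moved past one another. Writing $T^k = T^{\hat k_i} T_i^{k_i}$ on the left and $T^{*k} = T_i^{*k_i} T^{*\hat k_i}$ on the right, the outer factors commute past the $k_i$-summation (since they meet only factors of the same type), and the one-variable telescoping cascades cleanly through all $n$ coordinates. Positivity $D_{T^*}^2 \geq 0$ from the Brehmer hypothesis ensures every summand $T^k D_{T^*}^2 T^{*k}$ is positive, so the net of cubical partial sums is monotone and the SOT limit $I_\clh$ is unambiguous. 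Combining the isometry of $\Pi$ with the intertwining relation yields the isometric dilation of $T$ by $(M_{z_1}, \ldots, M_{z_n})$ on $H^2_\cld(\D^n)$.
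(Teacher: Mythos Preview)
Your argument is correct and is essentially the standard Poisson-kernel construction of Curto--Vasilescu and M\"uller--Vasilescu: define $\Pi h = \sum_k z^k \otimes D_{T^*} T^{*k} h$, use the one-coordinate splitting $D_{T^*}^2 = Y_{A} - T_i Y_{A} T_i^*$ (with $Y_A = \sum_{F \subseteq A}(-1)^{|F|}T_F T_F^*$, $i \in A$) to telescope in each variable, and invoke pureness to kill the remainders. The only point that deserves a word more of care is the passage from the innermost sum to the next: once you know $\sum_{k_i} T_i^{k_i} D_{T^*}^2 T_i^{*k_i} = Y_{\{1,\dots,n\}\setminus\{i\}}$ in SOT, positivity of each summand gives $Y_{\{1,\dots,n\}\setminus\{i\}} \geq 0$, so the next telescoping step is again a sum of positive terms and the induction goes through cleanly.

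Note, however, that the paper does not actually prove Theorem~\ref{canonical dilation}: it is quoted from the literature (the references to Curto--Vasilescu and M\"uller--Vasilescu) and used as a black box. So there is no in-paper argument to compare yours against; your write-up simply supplies the proof that the paper outsources.
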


In particular, there exists an isometry $\Pi: \mathcal{H} \rightarrow {H}_{\mathcal{D}}^{2}(\mathbb{D}^{n})$ such that $\Pi T_i^* = M_{z_i}^* \Pi$ for all $i=1, \ldots, n$. Then $\clq := \Pi \clh$ is a quotient module of $H^2_{\cld}(\D^n)$, and hence
\[
T \cong (P_{\clq} M_{z_1}|_{\clq}, \ldots, P_{\clq} M_{z_n}|_{\clq}),
\]
on $\clq$. Note again that, if $n=1$, then $\clq$ is a Beurling quotient module, and hence
\[
T \cong P_{(\Theta H^2_{\cle_*}(\D))^\perp} M_z|_{(\Theta H^2_{\cle_*}(\D))^\perp},
\]
for some Hilbert space $\cle_*$ and inner function $\Theta \in H^\infty_{\clb(\cle_*, \cle)}(\D)$. This inner function $\Theta$ and the Beurling quotient module
\[
\clq_{\Theta} = (\Theta H^2_{\cle_*}(\D))^\perp,
\]
are popularly known as the \textit{characteristic function} of $T$ and the \textit{model space} corresponding to $T$, respectively \cite{NF}. In summary, pure contractions are unitarily equivalent to compressions of $M_z$ to model spaces.

We now study an analog of the above analytic model theorem for $n$-tuples of commuting contractions. First we set up some notation. Let $\cle$ and $\cle_*$ be Hilbert spaces, and let $\Theta \in H^\infty_{\clb(\cle_*, \cle)}(\D^n)$ be an inner function. Let us denote by $\clq_{\Theta} = \he \ominus \Theta H^2_{\cle_*}(\D^n)$ and $\cls_{\Theta} = \Theta H^2_{\cle_*}(\D^n)$ the Beurling quotient module and the Beurling submodule, respectively, corresponding to $\Theta$. We also define $T_{z_i, \Theta} = P_{\clq_{\Theta}} M_{z_i}|_{\clq_{\Theta}}$ for all $i=1, \ldots, n$, and set
\[
T_{\Theta} = (T_{z_1, \Theta}, \ldots, T_{z_n, \Theta}).
\]
One can now ask which $n$-tuples of commuting contractions are unitarily equivalent to $T_{\Theta}$ on Beurling quotient modules (or, model spaces) $\clq_{\Theta}$. The following result (a refinement of Theorem \ref{Char dc in terms of compre}) yields a complete answer to this question.

\begin{thm}\label{characteristic function}
Let $T=(T_1,\dots, T_n)$ be an $n$-tuple of commuting contractions on $\clh$. The following are equivalent.

\textup{(a)} $T \cong T_{\Theta}$ for some Beurling quotient module $\clq_{\Theta}$.

\textup{(b)} $T$ is a Brehmer tuple and $(I_{\clh}-T_i^*T_i)(I_{\clh}-T_j^*T_j)=0$ for all $i \neq j$.
\end{thm}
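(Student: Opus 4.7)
The plan is to prove (a) $\Leftrightarrow$ (b) by reducing both directions to two already established black boxes: the Brehmer dilation (Theorem \ref{canonical dilation}) and the characterization of Beurling quotient modules (Theorem \ref{Char dc in terms of compre}). The only real computation is to verify that the canonical model tuple $T_\Theta$ on a Beurling quotient module itself satisfies the Brehmer positivity.

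For (a) $\Rightarrow$ (b), suppose $T\cong T_\Theta$ on some Beurling quotient module $\clq_\Theta\subseteq H^2_\cle(\D^n)$. Since unitary equivalence preserves every condition in (b), it suffices to check the conditions for $T_\Theta$. Purity of each $T_{z_i,\Theta}$ follows from the $M_{z_i}^*$-invariance of $\clq_\Theta$, which gives $T_{z_i,\Theta}^{*m}=M_{z_i}^{*m}|_{\clq_\Theta}$, combined with the fact that $M_{z_i}^{*m}\to 0$ strongly on $H^2_\cle(\D^n)$. The annihilator identity is immediate from applying Theorem \ref{Char dc in terms of compre} to the Beurling module $\clq_\Theta$. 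For the Brehmer positivity, I would exploit $T_{z_i,\Theta}^*=M_{z_i}^*|_{\clq_\Theta}$ to obtain $(T_\Theta)_F^*=M_z^{*F}|_{\clq_\Theta}$, and then compute, for $h\in\clq_\Theta$,
\[
\langle D_{T_\Theta^*}^2 h,h\rangle \;=\; \sum_{F\subseteq\{1,\ldots,n\}}(-1)^{|F|}\|M_z^{*F}h\|^2 \;=\; \Big\langle \prod_{i=1}^n(I-M_{z_i}M_{z_i}^*)\,h,h\Big\rangle \;=\; \|P_\cle h\|^2\;\ge\;0,
\]
using the standard identity that $\prod_{i=1}^n(I-M_{z_i}M_{z_i}^*)$ equals the projection of $H^2_\cle(\D^n)$ onto the constants $\cle$ (valid because the operators $M_{z_i}M_{z_i}^*$ pairwise commute).

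For the substantive direction (b) $\Rightarrow$ (a), I apply Theorem \ref{canonical dilation} to the Brehmer tuple $T$ to produce an isometry $\Pi:\clh\to H^2_\cld(\D^n)$ with $\Pi T_i^*=M_{z_i}^*\Pi$ for each $i$. Then $\clq:=\Pi\clh$ is a quotient module of $H^2_\cld(\D^n)$ and $\Pi$ implements a unitary equivalence between $T$ and the compression tuple $(P_\clq M_{z_1}|_\clq,\ldots,P_\clq M_{z_n}|_\clq)$. Under this equivalence the hypothesis $(I_\clh-T_i^*T_i)(I_\clh-T_j^*T_j)=0$ transports to $(I_\clq-C_{z_i}^*C_{z_i})(I_\clq-C_{z_j}^*C_{z_j})=0$ for all $i\neq j$. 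Theorem \ref{Char dc in terms of compre} then concludes that $\clq$ is a Beurling quotient module, i.e.\ $\clq=\clq_\Theta$ for some inner function $\Theta\in H^\infty_{\clb(\cle_*,\cld)}(\D^n)$, yielding $T\cong T_\Theta$ as desired.

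The only nontrivial step is the Brehmer computation above; the rest amounts to carefully pushing structural properties across the intertwiner $\Pi$ and invoking the two cited theorems. I expect no serious obstacle, but one should be careful to note that the ambient coefficient space in (a) need not coincide with the defect space $\cld=\overline{\mbox{ran}}\,D_{T^*}$ produced by the dilation in (b); the statement only asserts the existence of \emph{some} Beurling quotient module, so this flexibility is built in.
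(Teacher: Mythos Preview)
Your proof is correct and follows the same route as the paper, which simply says the result follows directly from Theorem~\ref{Char dc in terms of compre} and Theorem~\ref{canonical dilation}. You have fleshed out exactly the details the paper leaves implicit, in particular the verification that $T_\Theta$ is itself a Brehmer tuple via the identity $\prod_{i=1}^n(I-M_{z_i}M_{z_i}^*)=P_\cle$.
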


The proof directly follows from Theorem \ref{Char dc in terms of compre} and Theorem \ref{canonical dilation}.

\newsection{Factorizations and invariant subspaces}\label{sect: inv sub}

The main goal of this section is to classify factorizations of inner functions in terms of invariant subspaces of tuples of module operators. Our observation will also bring out a key difference between $n$-tuples of operators, $n > 1$, and single operators.

The structure of invariant subspaces of bounded linear operators has been traditionally related to the theory of (nontrivial) factorizations of one variable inner functions. For instance, the following result (see \cite[Chapter VI]{NF}, and more specifically \cite[Chapter 5, Proposition 1.21]{HB}) connects invariant subspaces of module (or model) operators with factorizations of the corresponding inner functions. Here we follow the same notation as in the discussion preceding Theorem \ref{characteristic function}.

\begin{thm}[Sz.-Nagy and Foias, and Bercovici] \label{fac inv subspace in D}
Let $\Theta\in H^\infty_{\clb(\cle_*, \cle)}(\D)$ be an inner function. Then $T_{\Theta}$ has an invariant subspace if and only if there exist a Hilbert space $\clf$ and inner functions $\Phi \in H^\infty_{\clb(\clf, \cle)}(\D)$ and $\Psi \in H^\infty_{\clb(\cle_*, \clf)}(\D)$ such that
\[
\Theta =\Phi \Psi.
\]
\end{thm}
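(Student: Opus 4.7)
The plan is to reduce both directions of Theorem \ref{fac inv subspace in D} to the one-variable Beurling-Lax-Halmos theorem, which provides the bridge between $M_z$-invariant subspaces of a vector-valued $H^2(\D)$ and inner multipliers. Throughout, ``invariant subspace'' is interpreted as a nontrivial proper closed invariant subspace, and the factorization is required to be nontrivial (neither $\Phi$ nor $\Psi$ is a constant unitary).

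\textbf{Sufficiency.} Suppose $\Theta=\Phi\Psi$ with $\Phi\in H^\infty_{\clb(\clf,\cle)}(\D)$ and $\Psi\in H^\infty_{\clb(\cle_*,\clf)}(\D)$ inner. Since $M_\Theta=M_\Phi M_\Psi$, we get $\Theta H^2_{\cle_*}(\D)\subseteq \Phi H^2_{\clf}(\D)\subseteq H^2_{\cle}(\D)$, and I set
\[
\clm := \Phi H^2_{\clf}(\D)\ominus \Theta H^2_{\cle_*}(\D)\;\subseteq\;\clq_{\Theta}.
\]
The nontriviality of both factors makes $\clm$ a proper nonzero subspace of $\clq_\Theta$. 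To check $T_\Theta$-invariance, pick $f\in\clm$. Since $\Phi H^2_{\clf}(\D)$ is $M_z$-invariant, $M_zf\in\Phi H^2_{\clf}(\D)$, and the decomposition $M_z f = T_\Theta f + P_{\Theta H^2_{\cle_*}(\D)}M_z f$ places the second summand inside $\Theta H^2_{\cle_*}(\D)\subseteq \Phi H^2_{\clf}(\D)$; hence $T_\Theta f\in \Phi H^2_{\clf}(\D)\cap\clq_\Theta=\clm$.

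\textbf{Necessity.} Conversely, given a nontrivial $T_\Theta$-invariant closed subspace $\clm\subseteq \clq_\Theta$, I form the ``lifted'' subspace
\[
\cln := \clm\oplus \Theta H^2_{\cle_*}(\D)\;\subseteq\;H^2_{\cle}(\D),
\]
and verify that $\cln$ is $M_z$-invariant: for $f\in\clm$, $M_z f = T_\Theta f + P_{\Theta H^2_{\cle_*}(\D)}M_zf\in \clm + \Theta H^2_{\cle_*}(\D) = \cln$, while $\Theta H^2_{\cle_*}(\D)$ is obviously $M_z$-invariant. Applying the Beurling-Lax-Halmos theorem on $\D$ produces a Hilbert space $\clf$ and an inner function $\Phi\in H^\infty_{\clb(\clf,\cle)}(\D)$ such that $\cln=\Phi H^2_{\clf}(\D)$. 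Nontriviality of $\clm$ and the strict inclusion $\Theta H^2_{\cle_*}(\D)\subsetneq \cln \subsetneq H^2_{\cle}(\D)$ guarantee that this $\Phi$ is not a unitary constant.

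\textbf{Extracting $\Psi$.} Since $\Theta H^2_{\cle_*}(\D)\subseteq \cln=\Phi H^2_{\clf}(\D)$ and $M_\Phi M_\Phi^*$ is the orthogonal projection onto $\Phi H^2_{\clf}(\D)$, we have $M_\Phi M_\Phi^* M_\Theta = M_\Theta$. Put $N := M_\Phi^* M_\Theta\colon H^2_{\cle_*}(\D)\to H^2_{\clf}(\D)$; then $M_\Phi N = M_\Theta$ and $N^*N=M_\Theta^*M_\Phi M_\Phi^* M_\Theta = M_\Theta^*M_\Theta = I$, so $N$ is an isometry. Since $M_\Phi^*$ and $M_\Theta$ both intertwine the shifts on their respective Hardy spaces, so does $N$. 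The main subtlety of the proof lies precisely here: one must invoke the classical fact (a corollary of Beurling-Lax-Halmos, or of commutant lifting) that any bounded operator between vector-valued Hardy spaces on $\D$ intertwining the shifts is itself a multiplication operator with an $H^\infty_{\clb(\cdot,\cdot)}(\D)$-symbol. Applied here, $N=M_\Psi$ for some $\Psi\in H^\infty_{\clb(\cle_*,\clf)}(\D)$; being an isometric multiplier, $\Psi$ is inner, and $M_\Theta = M_\Phi M_\Psi = M_{\Phi\Psi}$ gives $\Theta=\Phi\Psi$, completing the proof.
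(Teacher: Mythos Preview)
Your argument is correct and is, in fact, precisely the one-variable specialization of the proof the paper gives for its polydisc generalization (Theorem~\ref{inv subsp in D^n}, the equivalence $(1)\Leftrightarrow(2)$). Note that the paper does \emph{not} supply its own proof of Theorem~\ref{fac inv subspace in D}; it quotes the result from \cite{NF,HB}. So strictly speaking there is no ``paper's proof'' to compare against, but your approach matches the method the authors adopt in the $n$-variable setting: lift the $T_\Theta$-invariant subspace $\clm$ to the $M_z$-invariant subspace $\clm\oplus\cls_\Theta$, invoke Beurling--Lax--Halmos to produce $\Phi$, and then factor $M_\Theta$ through $M_\Phi$.

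One small point of phrasing: you write that ``$M_\Phi^*$ and $M_\Theta$ both intertwine the shifts,'' but $M_\Phi^*$ only intertwines the \emph{backward} shifts in general, so this does not directly yield $NM_z=M_zN$. The clean way (and the way the paper argues in Theorem~\ref{inv subsp in D^n}) is to use the identity $M_\Phi N=M_\Theta$ you already established: then $M_\Phi N M_z=M_\Theta M_z=M_z M_\Theta=M_z M_\Phi N=M_\Phi M_z N$, and injectivity of $M_\Phi$ gives $NM_z=M_zN$. The paper phrases this via Douglas's range-inclusion theorem to obtain the factor $X$ (which coincides with your $N=M_\Phi^*M_\Theta$), but the content is identical.
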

In the above, the corresponding $T_{\Theta}$-invariant subspace is given by $\clm = \cls_{\Phi} \ominus \cls_{\Psi}$ \cite[Chapter 5, Proposition 1.21]{HB}. Here we are interested in the polydisc version of the above theorem. However, the following example shows that in the case when $n > 1$ the existence of joint invariant subspaces is not sufficient to ensure factorizations of inner functions.

\begin{ex}\label{example: not factorable}
Consider the submodule $\cls=\{f\in H^2(\D^2):f(0, 0)=0\}$ of $H^2(\D^2)$. Since
\[
\cls=z_1 (H^2(\D) \otimes \mathbb{C}) \oplus z_2  (\mathbb{C} \otimes H^2(\D)) \oplus z_1z_2H^2(\D^2),
\]
and $\cls$ is a reproducing kernel Hilbert space, the kernel function $k$ of $\cls$ is given by
\[
k(z,w) = \frac{z_1 \bar{w}_1}{1 - z_1 \bar{w}_1} + \frac{z_2 \bar{w}_2}{1 - z_2 \bar{w}_2} + z_1 z_2 \mathbb{S}(z, w) \bar{w}_1 \bar{w}_2 \qquad (z, w \in \D^2),
\]
where
\[
\mathbb{S}(z,w) = (1 - z_1 \bar{w}_1)^{-1} (1 - z_2 \bar{w}_2)^{-1} \qquad (z, w \in \D^2),
\]
is the Szeg\"{o} kernel of $\D^2$. A simple calculation shows that
\[
k(z,w) = (z_1(1 - z_2 \bar{w}_2)\bar{w}_1 + z_2 \bar{w}_2) \mathbb{S}(z, w)  \qquad (z, w \in \D^2).
\]
If possible, suppose that $\cls$ is a Beurling submodule, that is, $\cls = \theta H^2(\D^2)$ for some inner function $\theta \in H^\infty(\D^2)$. Then $k(z,w) = \theta(z) \overline{\theta(w)} \mathbb{S}(z, w)$, from which it immediately follows that
\[
\theta(z) \overline{\theta(w)} = z_1(1 - z_2 \bar{w}_2)\bar{w}_1 + z_2 \bar{w}_2 \qquad (z, w \in \D^2).
\]
Clearly, the left side is a positive definite function while the right side is not. This proves that $\cls$ is not a Beurling submodule. Now observe
\[
\vp(z)=\frac{2z_1z_2-z_1-z_2}{2-z_1-z_2} \qquad (z \in \D^2),
\]
defines an inner function in $H^{\infty}(\D^2)$. We have $\vp(0,0)=0$, and $\vp H^2(\D^2) \subsetneqq \cls \subsetneqq H^2(\D^2)$. Set $\clm = \cls \ominus \vp H^2(\D^2)$. Then $\clm$ is a non-trivial $(P_{\clq_{\vp}} M_{z_1}|_{\clq_{\vp}}, P_{\clq_{\vp}} M_{z_2}|_{\clq_{\vp}})$-invariant subspace of $\clq_{\vp}$, but $\vp$ is not factorable.
\end{ex}

The missing component in the polydisc analogue of Theorem \ref{fac inv subspace in D} will be determined in Theorem \ref{inv subsp in D^n}. But before that, we need a lemma. We will identify as usual $M_{z_i}$ on $H^2_{\cle}(\D^n)$ with $M_{z_i} \otimes I_{\cle}$ on $H^2(\D^n) \otimes \cle$, $i=1, \ldots, n$, and write
\[
M_z \otimes I_{\cle} = (M_{z_1} \otimes I_{\cle}, \ldots, M_{z_n} \otimes I_{\cle}).
\]
We know, for each $i=1, \ldots, n$, that
\[
M_{z_i}^* (\mathbb{S}(\cdot,w) \otimes \eta) = \bar{w}_i (\mathbb{S}(\cdot,w) \otimes \eta),
\]
and hence $M_{z_i} M_{z_i}^* (\mathbb{S}(\cdot,w) \otimes \eta) = z_i \bar{w}_i (\mathbb{S}(\cdot,w)$ for all $w \in \D^n$ and $\eta \in \cle$. It is now easy to see that $D^2_{M_z \otimes I_{\cle}} = P_{\mathbb{C}} \otimes I_{\cle}$ (see the definition of defect operators in Section \ref{sec: dilations}), where $P_{\mathbb{C}}$ denotes the orthogonal projection of $H^2(\D^n)$ onto the one-dimensional subspace of constant functions.

\begin{lem}\label{unitary constant}
Let $\Theta\in H^{\infty}_{\clb(\cle_*,\cle)}(\D^n)$ be an inner function. If $\Theta H^2_{\cle_*}(\D^n)=H^2_{\cle}(\D^n)$, then $ \Theta $ is an unitary constant.
\end{lem}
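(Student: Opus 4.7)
The plan is to exploit the intertwining property of $M_\Theta$ with coordinate multiplication and reduce the statement to the observation that constants are the unique joint kernel of the adjoint shifts.

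First, I would note that $\Theta$ being inner means $M_\Theta \colon H^2_{\mathcal{E}_*}(\D^n) \to H^2_{\mathcal{E}}(\D^n)$ is an isometry; the hypothesis $\Theta H^2_{\mathcal{E}_*}(\D^n) = H^2_{\mathcal{E}}(\D^n)$ then promotes $M_\Theta$ to a unitary operator between the two Hardy spaces. Since $M_\Theta$ intertwines $M_{z_i} \otimes I_{\mathcal{E}_*}$ with $M_{z_i} \otimes I_{\mathcal{E}}$ for each $i$, unitarity upgrades this to an intertwining of the adjoints as well:
\[
M_\Theta (M_{z_i}^* \otimes I_{\mathcal{E}_*}) = (M_{z_i}^* \otimes I_{\mathcal{E}}) M_\Theta \qquad (i=1,\ldots,n).
\]

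Second, I would apply this to a constant function. Take any $\eta \in \mathcal{E}_*$, viewed as the constant element $1 \otimes \eta \in H^2(\D^n) \otimes \mathcal{E}_*$. Then $(M_{z_i}^* \otimes I_{\mathcal{E}_*})(1 \otimes \eta) = 0$ for every $i$, so the intertwining forces $(M_{z_i}^* \otimes I_{\mathcal{E}})(\Theta \eta) = 0$ for every $i$. But $\bigcap_{i=1}^n \ker (M_{z_i}^* \otimes I_{\mathcal{E}})$ is precisely the space of constants $1 \otimes \mathcal{E}$ (equivalently, the range of $P_{\mathbb{C}} \otimes I_{\mathcal{E}}$ mentioned just before the lemma). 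Hence $\Theta \eta$ is a constant function in $H^2_{\mathcal{E}}(\D^n)$ for every $\eta \in \mathcal{E}_*$, which means $\Theta(z) \equiv \Theta_0$ for some fixed $\Theta_0 \in \mathcal{B}(\mathcal{E}_*, \mathcal{E})$.

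Finally, to conclude that $\Theta_0$ is unitary, I would observe that $\Theta$ inner gives $\Theta_0^* \Theta_0 = I_{\mathcal{E}_*}$, while surjectivity of $M_\Theta$ applied to the constants forces $\Theta_0 \mathcal{E}_* = \mathcal{E}$, and hence $\Theta_0 \Theta_0^* = I_{\mathcal{E}}$. Thus $\Theta_0$ is unitary.

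There is no real obstacle here; the only point to handle carefully is the identification of the joint kernel of the adjoint shifts with the constants, which is immediate since $H^2_{\mathcal{E}}(\D^n)$ decomposes via monomials $z^k \otimes \mathcal{E}$ and $M_{z_i}^*$ shifts the $i$-th index down. Alternatively, as hinted in the discussion preceding the lemma, one may frame the same argument by noting that the defect operator $P_{\mathbb{C}} \otimes I_{\mathcal{E}}$ is unitarily intertwined with $P_{\mathbb{C}} \otimes I_{\mathcal{E}_*}$ via $M_\Theta$, so $M_\Theta$ carries constants to constants.
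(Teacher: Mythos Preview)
Your proof is correct and is essentially the same as the paper's. Both arguments use that $M_\Theta$, being unitary, intertwines the adjoint shifts and hence carries constants to constants; the paper phrases this via the defect operator identity $(P_{\mathbb{C}}\otimes I_{\cle})M_\Theta = M_\Theta(P_{\mathbb{C}}\otimes I_{\cle_*})$, while you go directly through the joint kernel $\bigcap_i \ker(M_{z_i}^*\otimes I_{\cle})$ --- a distinction you yourself note in your closing remark.
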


\begin{proof} Since, by hypothesis, $M_{\Theta}: H^2_{\cle_*}(\D^n)\rightarrow H^2_{\cle}(\D^n)$ is unitary and $(M_{z_i} \otimes I_{\cle}) M_{\Theta} = M_{\Theta} (M_{z_i} \otimes I_{\cle_*})$, it follows that $(M_{z_i}^* \otimes I_{\cle}) M_{\Theta} = M_{\Theta} (M_{z_i}^* \otimes I_{\cle_*})$ for all $i=1,\ldots,n$. Then
\[
D^2_{M_z \otimes I_{\cle}} M_{\Theta} = M_{\Theta} D^2_{M_z \otimes I_{\cle_*}},
\]
and hence
\[
(P_{\mathbb{C}} \otimes I_{\cle}) M_{\Theta} = M_{\Theta} (P_{\mathbb{C}} \otimes I_{\cle_*}).
\]
Thus, for any $\eta \in \cle_*$, we have $\Theta(z) \eta = \Theta(0) \eta$, $z \in \D^n$, that is, $\Theta \equiv \Theta(0)$ is a constant function. This completes the proof of the lemma.
\end{proof}

The $n=1$ case of the above lemma can be found in \cite[Chapter 5, Proposition 1.17]{HB}. Moreover, the present proof is slightly simpler.

We are now ready for the polydisc analog of Theorem \ref{fac inv subspace in D}. We will use the same notation as in the discussion preceding Theorem \ref{characteristic function}.

\begin{thm}\label{inv subsp in D^n}
Let $\Theta \in H^{\infty}_{\clb(\cle_*, \cle)}(\D^n)$ be an inner function. The following are equivalent.
\begin{enumerate}
\item There exist a Hilbert space $\clf$ and inner functions $\Psi$ and $\Phi$ in  $H^{\infty}_{\clb(\cle_*, \clf)}(\D^n)$ and $H^{\infty}_{\clb(\clf, \cle)}(\D^n )$, respectively, such that $\Theta = \Phi \Psi$.

\item There exists a $T_{\Theta}$-invariant subspace $\clm \subseteq \clq_{\Theta}$ such that $\clm\oplus \cls_{\Theta}$ is a Beurling submodule of $H^2_{\cle}(\D^n)$.

\item There exists a $T_{\Theta}$-invariant subspace $\clm \subseteq \clq_{\Theta}$ such that
\[
(I-C_i^*C_i)(I-C_j^*C_j)=0 \qquad (i \neq j),
\]	
where $C_s = P_{\clq_{\Theta}\ominus \clm}T_{z_s, \Theta}|_{\clq_{\Theta}\ominus\clm}$ for all $s = 1,\dots, n$.
\end{enumerate}
\end{thm}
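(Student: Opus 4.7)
The plan is to reduce everything to the previously established Theorem \ref{Char dc in terms of compre}, together with a standard Douglas-type factorization of multipliers. I will prove $(2) \Leftrightarrow (3)$ first, then close the loop with $(1) \Rightarrow (2)$ and $(2) \Rightarrow (1)$.

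\emph{For $(2) \Leftrightarrow (3)$:} Given any $T_\Theta$-invariant subspace $\clm \subseteq \clq_\Theta$, the sum $\clm \oplus \cls_\Theta$ is automatically an $M_{z_i}$-invariant submodule of $\he$. Indeed, for $f \in \clm$ one writes $M_{z_i} f = T_{z_i, \Theta} f + P_{\cls_\Theta} M_{z_i} f$, and both summands lie in $\clm \oplus \cls_\Theta$ by $T_\Theta$-invariance of $\clm$. Hence $\clq_\Theta \ominus \clm = \he \ominus (\clm \oplus \cls_\Theta)$ is a genuine quotient module of $\he$, and since $\clq_\Theta \ominus \clm \subseteq \clq_\Theta$, its module operators $P_{\clq_\Theta \ominus \clm} M_{z_i}|_{\clq_\Theta \ominus \clm}$ coincide with the operators $C_i$ appearing in (3). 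Theorem \ref{Char dc in terms of compre} then states that $\clq_\Theta \ominus \clm$ is a Beurling quotient module iff the product condition in (3) holds; being a Beurling quotient module is, by definition, exactly condition (2).

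\emph{For $(1) \Rightarrow (2)$:} Given $\Theta = \Phi\Psi$, set $\clm := \Phi H^2_{\clf}(\D^n) \ominus \Theta H^2_{\cle_*}(\D^n)$. The inclusion $\Theta H^2_{\cle_*} = \Phi\Psi H^2_{\cle_*} \subseteq \Phi H^2_{\clf}$ guarantees $\clm$ is well defined and sits inside $\clq_\Theta$. For any $f \in \clm$, the $M_{z_i}$-invariance of both $\Phi H^2_{\clf}$ and $\Theta H^2_{\cle_*}$ lets us decompose $M_{z_i}f = g + h$ with $g \in \clm$ and $h \in \Theta H^2_{\cle_*}$, so $T_{z_i,\Theta}f = P_{\clq_\Theta}M_{z_i}f = g \in \clm$. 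By construction $\clm \oplus \cls_\Theta = \Phi H^2_{\clf}$ is a Beurling submodule.

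\emph{For $(2) \Rightarrow (1)$ (the main obstacle):} Suppose $\clm \oplus \cls_\Theta = \Phi H^2_{\clf}(\D^n)$ for some inner $\Phi \in H^\infty_{\clb(\clf, \cle)}(\D^n)$. Since $\Theta H^2_{\cle_*} \subseteq \Phi H^2_{\clf}$ and $M_\Phi M_\Phi^*$ is the orthogonal projection onto $\Phi H^2_{\clf}$, the operator $\Lambda := M_\Phi^* M_\Theta \colon H^2_{\cle_*}(\D^n) \to H^2_{\clf}(\D^n)$ satisfies
\[
\|\Lambda f\|^2 = \langle M_\Phi M_\Phi^* M_\Theta f, M_\Theta f \rangle = \|M_\Theta f\|^2 = \|f\|^2,
\]
and is therefore an isometry. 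A short check (using that $M_\Theta(H^2_{\cle_*}) \subseteq \Phi H^2_{\clf}$, on which $M_\Phi^*$ does commute with every $M_{z_i}$) shows $\Lambda M_{z_i} = M_{z_i}\Lambda$ for each $i$. By the standard characterization of shift intertwiners, $\Lambda = M_\Psi$ for an inner function $\Psi \in H^\infty_{\clb(\cle_*, \clf)}(\D^n)$, and $M_\Phi M_\Psi = M_\Phi M_\Phi^* M_\Theta = M_\Theta$ yields the desired factorization $\Theta = \Phi\Psi$.

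The most delicate point is this last step: identifying the intertwining isometry $\Lambda$ with a multiplication operator $M_\Psi$. One must be careful because $M_\Phi^*$ does not commute with $M_{z_i}$ on arbitrary vectors, only on elements of $\Phi H^2_{\clf}$; it is precisely the fact that $\mathrm{ran}\,M_\Theta \subseteq \Phi H^2_{\clf}$ that makes $\Lambda$ a shift-commuting isometry, and hence an inner multiplier.
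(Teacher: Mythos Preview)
Your proof is correct and follows essentially the same route as the paper: the equivalence $(2)\Leftrightarrow(3)$ is reduced to Theorem \ref{Char dc in terms of compre} in both, the subspace $\clm=\cls_\Phi\ominus\cls_\Theta$ is used for $(1)\Rightarrow(2)$ in both, and for $(2)\Rightarrow(1)$ your explicit $\Lambda=M_\Phi^*M_\Theta$ is precisely the operator the paper obtains via Douglas's range-inclusion theorem (since $M_\Phi$ is an isometry, the Douglas factor is uniquely $M_\Phi^*M_\Theta$). The only differences are organizational: you prove $(2)\Leftrightarrow(3)$ in one stroke rather than via $(1)\Rightarrow(3)\Rightarrow(2)$, and you verify $T_\Theta$-invariance of $\clm$ directly rather than by showing the complement is $T_\Theta^*$-invariant.
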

\begin{proof}
$(1)\Rightarrow (2)$: Since $M_\Theta = M_\Phi M_\Psi$, we have $\Theta H^2_{\cle_*}(\D^n) \subseteq \Phi H^2_{\clf}(\D^n)$. Define
\[
\clm := \Phi H^2_{\clf}(\D^n) \ominus \Theta H^2_{\cle_*}(\D^n) = \cls_\Phi \ominus \cls_\Theta.
\]
Clearly, $\clm$ is a closed subspace of $\clq_{\Theta}$. Also note that
\[
\clq_{\Theta} \ominus \clm = (\he \ominus \Theta H^2_{\cle_*}(\D^n)) \ominus (\Phi H^2_{\clf}(\D^n) \ominus \Theta H^2_{\cle_*}(\D^n)),
\]
and hence, $\clq_{\Theta} \ominus \clm = \clq_{\Phi}$. Since $T_{z_i, \Theta}^* = \mi^*|_{\clq_\Theta}$ and $\clq_{\Phi} \subseteq \clq_{\Theta}$, it follows that $\clq_{\Phi}$ is $T_{z_i, \Theta}^*$-invariant for all $i=1,\dots, n.$ Consequently, $\clm$ is a $T_\Theta$-invariant subspace. For the second part, observe that
\[
\clm\oplus \cls_{\Theta} = \cls_{\Phi},
\]
is a Beurling submodule of $H^2_{\cle}(\D^n)$.

\NI {$(2)\Rightarrow (1)$}: Let $\clm$ is a $T_{\Theta}$-invariant subspace of $\clq_{\Theta}$ and suppose $\clm\oplus \cls_{\Theta}$ is a Beurling submodule of  $H^2_{\cle}(\D^n)$. Then (see the discussion preceding Lemma \ref{lemma: reduction}) there exist a Hilbert space $\clf$ and an inner function $\Phi \in H^{\infty}_{\clb(\clf, \cle)}(\D^n)$ such that
\[
\clm\oplus \cls_{\Theta}=\Phi H^2_{\clf}(\D^n).
\]
In particular, $\Theta H^2_{\cle}(\D^n)\subseteq \Phi H^2_{\clf}(\D^n)$, and hence, by Douglas's range and inclusion theorem, there exists a contraction
$ X: H^2_{\cle_*}(\D^n) \to H^2_{\clf}(\D^n)$ such that $M_{\Theta}=M_{\Phi}X$. But now, since $M_{\Phi}$ is an isometry and
\[
M_{\Phi}XM_{z_i} = M_{\Theta}M_{z_i} = M_{z_i}M_{\Theta}
= M_{z_i}M_{\Phi}X = M_{\Phi}M_{z_i}X,
\]
we find $XM_{z_i}=M_{z_i}X$ for all $i=1, \ldots,n$. Then there exists $\Psi \in H^{\infty}_{\clb(\cle_*, \clf)}(\D^n)$ such that $X = M_{\Psi}$. Finally, since $M_{\Theta}$ and $M_{\Phi}$ are isometries, we obtain
\[
\|f\|=\|M_{\Theta}f\|=\|M_{\Phi}M_{\Psi}f\|= \|M_{\Psi}f\| \qquad (f\in H^2_{\cle}(\D^n)),
\]
and hence, $M_{\Psi}$ is an isometry.

\NI $(1)\Rightarrow (3)$: As in the proof of $(1)\Rightarrow (2)$, if we set $\clm=\Phi H^2_{\clf}(\D^n)\ominus \Theta H^2_{\cle_*}(\D^n)$, then $\clq_{\Theta}\ominus \clm = \clq_{\Phi}$, which implies $C_s = P_{\clq_{\Phi}}M_{z_s}|_{\clq_{\Phi}}$ for all $s =1,\ldots, n$. Then the desired equality immediately follows from Theorem \ref{Char dc in terms of compre} applied to $(C_1, \ldots, C_n)$ on the Beurling quotient module $\clq_{\Phi}$.

\NI $(3)\Rightarrow (2)$: Since $T_{z_i, \Theta}^* = \mi^*|_{\clq_\Theta}$, $i=1, \ldots, n$, it follows that $\clq_{\Theta} \ominus \clm$ is a quotient module of $\he$. This says $\clq_{\Theta} \ominus \clm$ is a Beurling quotient module, taking into account the hypothesis and Theorem \ref{Char dc in terms of compre}. Finally, we observe
\[
\he \ominus (\clm \oplus \cls_\Theta) = \clq_\Theta \ominus \clm,
\]
which implies that $\clm \oplus \cls_\Theta$ is a Beurling submodule. This completes the proof of the theorem.
\end{proof}

It is now worthwhile to observe that the subspace $\clm \oplus \vp H^2(\D^2)$ in Example \ref{example: not factorable} is not a Beurling submodule.

Finally, let us concentrate on the trivial cases of the above theorem, namely, $\clm = \{0\}$ and $\clm = \clq_{\Theta}$. Recall that $\clm = \Phi H^2_{\clf}(\D^n)\ominus \Theta H^2_{\cle_*}(\D^n)$. Then $\clm = \{0\}$ if and only if $\Phi H^2_{\clf}(\D^n)= \Theta H^2_{\cle_*}(\D^n)$, which, since $\Theta = \Phi \Psi$, equivalent to $H^2_{\clf}(\D^n)= \Psi H^2_{\cle_*}(\D^n)$. By Lemma \ref{unitary constant}, the latter condition is equivalent to the condition that $\Psi$ is a unitary constant. For the second case, we note that $\clm = \clq_{\Theta}$ if and only if
\[
\Phi H^2_{\clf}(\D^n)\ominus \Theta H^2_{\cle_*}(\D^n) = \he \ominus \Theta H^2_{\cle_*}(\D^n),
\]
which is equivalent to $\he = \Phi H^2_{\clf}(\D^n)$. Therefore, we note, again by Lemma \ref{unitary constant}, that $\clm=\clq_{\Theta}$ if and only if $\Phi$ is a unitary constant. This proves that $\clm$ is a nontrivial $T_{\Theta}$-invariant subspace of $\clq_\Theta$ if and only if the inner functions $\Phi$ and $\Psi$ are not unitary constant.

In fact, something more can be said. We continue to use the setting and conclusion of Theorem \ref{inv subsp in D^n}.

\begin{cor}\label{cor: unique}
Let $\Theta \in H^{\infty}_{\clb(\cle_*, \cle)}(\D^n)$ be a nonconstant inner function.  Then the inner functions $\Phi$ and $\Psi$ are nonconstant if and only if the following holds:
\begin{enumerate}
\item $\clm$ is a nontrivial $T_{\Theta}$-invariant subspace of $\clq_{\Theta}$,
\item $\clm$ is not a Beurling submodule of $H^2_{\cle}(\D^n)$, and
\item $\clq_{\Theta}\ominus\clm$ does not reduce $M_z \otimes I_{\cle}$.
\end{enumerate}
\end{cor}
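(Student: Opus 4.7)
The plan is to translate each of the three conditions into a condition directly on the factors $\Phi$ and $\Psi$, and then read off the equivalence. The starting point is two identities from the proof of Theorem \ref{inv subsp in D^n}: first, $\clm \oplus \cls_{\Theta} = \cls_{\Phi}$, so that $\clq_{\Theta} \ominus \clm = \clq_{\Phi}$, and second, using $M_{\Theta}=M_{\Phi}M_{\Psi}$, that $\clm = M_{\Phi}\clq_{\Psi}$. These identities are the bridge between the geometric conditions (1)--(3) and statements about $\Phi$ and $\Psi$.

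Next I would prove the technical workhorse: a Beurling submodule $\Xi H^2_{\clg}(\D^n) \subseteq \he$ is reducing for the coordinate multipliers $(M_{z_1}, \ldots, M_{z_n})$ if and only if $\Xi$ is a unitary constant. This uses the classical description of joint reducing subspaces of $M_z \otimes I_{\cle}$ on $H^2(\D^n) \otimes \cle$ as precisely $H^2(\D^n) \otimes \cle'$ for closed $\cle' \subseteq \cle$; if $\Xi H^2_{\clg}(\D^n) = H^2_{\cle'}(\D^n)$, then regarding $\Xi$ as an element of $H^{\infty}_{\clb(\clg, \cle')}(\D^n)$ and invoking Lemma \ref{unitary constant} yields the claim.

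With this in hand, (3) is quick: $\clq_{\Phi}$ reduces $M_z \otimes I_{\cle}$ iff $\cls_{\Phi}$ does, iff $\Phi$ is a unitary constant; hence (3) is equivalent to $\Phi$ being nonconstant. For (2), I would exploit that $M_{\Phi}$ is an isometry intertwining the coordinate multipliers, so $M_{z_i}$-invariance of $\clm = M_{\Phi}\clq_{\Psi}$ is equivalent to $M_{z_i}$-invariance of $\clq_{\Psi}$; combined with the automatic $M_{z_i}^*$-invariance of $\clq_{\Psi}$, this forces $\cls_{\Psi}$ to be reducing, hence $\Psi$ to be a unitary constant, in which case $\clm = \{0\}$ is trivially a Beurling submodule. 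Thus (2) is equivalent to $\Psi$ being nonconstant. Since (1) is equivalent to both $\Phi$ and $\Psi$ being nonconstant by the paragraph preceding the corollary, the three conditions together characterize precisely the case where both factors are nonconstant.

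The step I expect to demand the most care is the translation of (2): one must verify that $\clm$ being a \emph{Beurling} submodule (which presupposes, in particular, $M_{z_i}$-invariance) really pulls back to $\cls_{\Psi}$ being a reducing subspace, and then separately acknowledge the degenerate case $\clm = \{0\}$ as a (trivially) Beurling submodule.
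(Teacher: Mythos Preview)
Your strategy is cleaner than the paper's: you aim for direct biconditionals $(2)\iff\Psi$ nonconstant and $(3)\iff\Phi$ nonconstant via the single workhorse ``a Beurling submodule reduces iff its inner function is constant,'' and then read off the corollary. The paper instead proves the forward direction by contradiction for each of (2) and (3) separately, and in the converse direction runs a case analysis on whether $\Phi$ or $\Psi$ could be a \emph{non-unitary} constant isometry. Your bridge $\clm = M_\Phi\clq_\Psi$ and the pullback of $M_{z_i}$-invariance through the isometry $M_\Phi$ are exactly the right moves; the paper reaches the same reduction for (2) more circuitously, by first writing $\clm=\Phi_1 H^2_{\clf_1}$ and factoring $\Phi_1=\Phi\Phi_2$.

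There is one slip to repair. Your workhorse, as your own argument shows, yields only that $\Xi$ is \emph{constant} (Lemma~\ref{unitary constant} gives a unitary onto the subspace $\cle'$, which is merely an isometry into $\cle$). This matters in your handling of (2): from $\cls_\Psi$ reducing you conclude only that $\Psi$ is constant, and then $\clm = M_\Phi\clq_\Psi = \Phi H^2_{\clf\ominus\Psi(0)\cle_*}$ is a (possibly nonzero) Beurling submodule, not $\{0\}$ in general. So the backward implication ``$\Psi$ constant $\Rightarrow$ $\clm$ Beurling'' needs this one extra line, after which your equivalence for (2) is complete. Relatedly, the paragraph preceding the corollary gives $(1)\iff$ both factors are not \emph{unitary} constant, not ``nonconstant''; this does not damage your argument, since your translations of (2) and (3) alone already force both factors to be nonconstant, and $(1)$ then follows.
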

\begin{proof}
We have already seen that $\clm$ is a nontrivial subspace of $\clq_\Theta$ if and only if both the inner functions $\Phi$ and $\Psi$ are not unitary constant. In particular, if $\Phi$ and $\Psi$ are nonconstant, then $\clm$ is a nontrivial subspace of $\clq_{\Theta}$. Now suppose that $\clm$ is a Beurling submodule. Then there exist a Hilbert space $\clf_1$ and an inner function $\Phi_1 \in H^\infty_{\clb(\clf_1, \cle)}(\D^n)$ such that $\Phi H^2_{\clf}(\D^n)\ominus \Theta H^2_{\cle_*}(\D^n)=\Phi_1 H^2_{\clf_1}(\D^n)$. In particular, $\Phi_1 H^2_{\clf_1}(\D^n) \subseteq \Phi H^2_{\clf}(\D^n)$, which implies that $\Phi_1 = \Phi \Phi_2$ for some inner function $\Phi_2 \in  H^\infty_{\clb(\clf_1, \clf)}(\D^n)$. This yields $\Phi_1 H^2_{\clf_1}(\D^n)  = \Phi \Phi_2H^2_{\clf_1}(\D^n)$, and hence
\[
\Phi \Phi_2H^2_{\clf_1}(\D^n) = \Phi H^2_{\clf}(\D^n)\ominus \Phi \Psi H^2_{\cle_*}(\D^n)=\Phi\clq_{\Psi},
\]
from which we obtain $\clq_{\Psi}=\Phi_2H^2_{\clf_1}(\D^n)$. Thus $\clq_{\Psi}$, or equivalently, $\cls_\Psi$ reduces $M_z \otimes I_{\cle}$, which implies that $\Psi$ is a constant. This is a contradiction.

\noindent Finally, suppose towards a contradiction that $\clq_{\Theta}\ominus\clm$ reduces $M_z \otimes I_{\cle}$. Then
\[
\clm\oplus \cls_{\Theta} = H^2_{\cle}(\D^n)\ominus (\clq_{\Theta}\ominus\clm),
\]
also reduces $M_z \otimes I_{\cle}$. On the other hand, since $\clm\oplus \cls_{\Theta}=\Phi H^2_{\clf}(\D^n)$, it follows that $\Phi H^2_{\clf}(\D^n)=H^2_{\clf_2}(\D^n)$, and hence that $\Phi$ is a constant, which is a contradiction.

\NI Now we turn to the converse part. Suppose $\clm$ is a nontrivial $T_{\Theta}$-invariant subspace of $\clq_{\Theta}$. Since $\Theta = \Phi \Psi$ and $\Theta$ is nonconstant, both $\Phi$ and $\Psi$ can not be constant. Moreover, since $\clm$ is nontrivial, $\Phi$ and $\Psi$ cannot be unitary constants (see the discussion preceding the statement of the corollary). It remains to show that $\Phi$ and $\Psi$ cannot be constant isometry operators. First, let us assume that $\Phi \equiv V_1$ for some non-unitary isometry $V_1$ and that $\Psi$ is nonconstant. Then
\[
\clm\oplus  \Theta H^2_{\cle_*}(\D^n) =\Phi H^2_{\clf}(\D^n)=V_1 H^2_{\clf}(\D^n)=H^2_{V_1\clf}(\D^n),
\]
and hence $\clm\oplus \Theta H^2_{\cle_*}(\D^n)$ reduces $M_z \otimes I_{\cle}$, which is a contradiction. On the other hand, if $\Psi \equiv V_2$ and $\Phi$ is nonconstant, where $V_2$ is a non-unitary isometry, then
\[
\clm=\Phi H^2_{\clf}(\D^n)\ominus \Phi \Psi H^2_{\cle_*}(\D^n) = \Phi( H^2_{\clf}(\D^n)\ominus V_2 H^2_{\cle_*}(\D^n)) = \Phi H^2_{\clf\ominus V_2\cle^*}(\D^n),
\]
is a Beurling submodule of $H^2_{\cle}(\D^n)$, which is a contradiction. This completes the proof of the corollary.
\end{proof}

We refer the reader to the papers \cite{BL, GW, ZYL} and the survey \cite{Yang} for other results (mostly in two variables) on Beurling quotient modules.

\vspace{0.1in}

\noindent\textsf{Acknowledgement:}
The first named author acknowledges Indian Institute of Technology Bombay for warm hospitality. The research of the first named author is supported by the institute post-doctoral fellowship of IIT Bombay.
The second named author's  research work is supported by DST-INSPIRE Faculty Fellowship No. DST/INSPIRE/04/2015/001094. The fourth named author is supported in part by the Mathematical Research Impact Centric Support, MATRICS (MTR/2017/000522), and Core Research Grant (CRG/2019/000908), by SERB (DST), and NBHM (NBHM/R.P.64/2014), Government of India.


\begin{thebibliography}{99}

\bibitem{B}
A. Beurling, {\em On two problems concerning linear transformations in
Hilbert space}, Acta Math. 81, 239-255 (1949).

\bibitem{BRE}
S. Brehmer, {\em \"{U}ber vetauschbare Kontraktionen des Hilbertschen Raumes}, Acta Sci. Math. (Szeged) 22, 106-111 (1961).

\bibitem{HB}
H. Bercovici, {\em Operator theory and arithmetic in $H^{\infty}$}, American Mathematical Society, 1988.

\bibitem{BL}
K. Bickel and C. Liaw, {\em Properties of Beurling-type submodules via Agler decompositions}, J. Funct. Anal. 272 (2017), 83–111.

\bibitem{CG}
X. Chen and K. Guo, {\em Analytic Hilbert modules}, Chapman \& Hall/CRC Research Notes in Mathematics, 433. Chapman \& Hall/CRC, Boca Raton, FL, 2003.

\bibitem{CV}
R. Curto and F.-H. Vasilescu, {\em Standard operator models in the polydisc,} Indiana Univ. Math. J. 42 (1993), 791-810.

\bibitem{Douglas}
R. G. Douglas, {\em On majorization, factorization, and range inclusion of operators on Hilbert space},  Proc. Amer. Math. Soc. 17, 413-415 (1966).




\bibitem{DP}
R. Douglas and V. Paulsen, {\em Hilbert modules over function algebras}, Pitman Research Notes in Mathematics Series, 217. Longman Scientific \& Technical, Harlow; copublished in the United States with John Wiley \& Sons, Inc., New York, 1989.

\bibitem{GW}
K. Guo and K. Wang, {\em Beurling type quotient modules over the bidisk and boundary representations}, J. Funct. Anal. 257 (2009), 3218–3238.

\bibitem{Mand}
V. Mandrekar, {\em The validity of Beurling theorems in polydiscs}, Proc. Amer. Math. Soc. 103 (1988), 145–148.

\bibitem{MV}
V. M\"{u}ller and F.-H. Vasilescu, {\em Standard models for some commuting multioperators}, Proc. Amer. Math. Soc. 117 (1993), 979–989.

\bibitem{NF}
B. Sz.-Nagy and C. Foias, {\em Harmonic analysis of operators on Hilbert space}, North-Holland, Amsterdam, (1970).

\bibitem{R}
W. Rudin, {\em Function theory in polydiscs}, W. A. Benjamin, Inc., New
York-Amsterdam, (1969).
 	

\bibitem{SSW} J. Sarkar, A. Sasane and B. D. Wick,   {\em Doubly commuting submodules of the Hardy module over polydiscs}, Studia Math. 217 (2013), 179-192.

\bibitem{Yang}
R. Yang, {\em A brief survey of operator theory in $H^2(\D^2)$}, Handbook of analytic operator theory, 223–258, CRC Press/Chapman Hall Handb. Math. Ser., CRC Press, Boca Raton, FL, 2019.


\bibitem{ZYL}
S. Zhu, Y. Yang and Y. Lu, {\em The reducibility of compressed shifts on Beurling type quotient modules over the bidisk}, J. Funct. Anal. 278 (2020), 108304, 40 pp.

\end{thebibliography}
\end{document}